\numberwithin{equation}{section}
\newcommand{\n}{\frak n}
\theoremstyle{plain}
\newtheorem{theorem}{Theorem}[section]
\newtheorem{lemma}[theorem]{Lemma}
\newtheorem{prop}[theorem]{Proposition}
\newtheorem{cor}[theorem]{Corollary}
\newtheorem{rem}[theorem]{Remark}
\newtheorem{defn}[theorem]{Definition}
\theoremstyle{definition}
\newtheorem{ex}{Example}
\newcommand\C{{\mathbb C}}
\newcommand\R{{\mathbb R}}
\newcommand\Q{{\mathbb Q}}
\renewcommand{\bar}[1]{\overline{#1}}
\newcommand{\g}{\mathfrak{g}}
\begin{document}

\title[SKT and tamed symplectic structures on solvmanifolds]{SKT and tamed symplectic structures on solvmanifolds}
\author{Anna Fino,  Hisashi Kasuya and Luigi Vezzoni}
\address{(Anna Fino, Luigi Vezzoni) Dipartimento di Matematica G. Peano \\ Universit\`a di Torino\\
Via Carlo Alberto 10\\
10123 Torino\\ Italy}
\email{annamaria.fino@unito.it}
 \email{luigi.vezzoni@unito.it}
 \address{(Hisashi Kasuya) Department of Mathematics, Tokyo Institute of 
Technology, 1-12-1, O-okayama, Meguro, Tokyo 152-8551, Japan}
\email{khsc@ms.u-tokyo.ac.jp}

\subjclass[2010]{Primary 32J27; Secondary 53C55, 53C30, 53D05}
\thanks{This work was partially supported by the project PRIN {\em Variet\`a reali e complesse: geometria, topologia e analisi armonica},  the project FIRB {\em Differential Geometry and Geometric functions theory} and GNSAGA (Indam) of Italy.\\
Keywords and phrases: {\em Special Hermitian metrics, solvmanifolds}
}

\begin{abstract}
We study  the existence of  strong K\"ahler with torsion (SKT) metrics and of  symplectic  forms  taming   invariant  complex structures $J$  on solvmanifolds $G/\Gamma$  providing some negative results for some classes of solvmanifolds. In particular,  we show that   if   either  $J$ is invariant  under the action of  a nilpotent complement   of the nilradical of $G$ or $J$ is abelian or $G$ is almost abelian (not of  type (I)),   then the  solvmanifold $G/\Gamma$  cannot admit any  symplectic  form   taming the complex structure $J$, unless $G/\Gamma$ is K\"ahler. As a consequence,  we show  that     the  family  of non-K\"ahler complex manifolds constructed by Oeljeklaus and Toma cannot admit any symplectic form taming the complex structure.

\end{abstract}

\maketitle

\section{Introduction}
A symplectic form $\Omega$ on a complex manifold $(M,J)$ is said  {\em taming}  the complex structure $J$  if
$$
\Omega(X,JX)>0
$$
for any non-zero vector field $X$ on $M$ or, equivalently, if the $(1,1)$-part of $\Omega$ is positive.  The pair $(\Omega, J)$ was called  in \cite{ST} a  {\em Hermitian-symplectic} structure and it was shown that these structures   appear  as static solutions  of  the so-called {\em pluriclosed flow}.    By \cite{LZ, ST}   a  compact complex surface admitting a Hermitian-symplectic structure is necessarily K\"ahler (see also Proposition 3.3 in \cite{DLZ}) and it follows from \cite{P} that non-K\"ahler Moishezon complex structures on compact manifolds cannot be tamed by a symplectic form (see also \cite{Z}).
However, it  is  still  an open problem to find out an example of a compact Hermitian-symplectic manifold non admitting
K\"ahler structures.  It is well known that  Hermitian-symplectic structures can be viewed  as  special strong K\"ahler with torsion structures (\cite{EFV}) and that their existence can be characterized in terms of currents (\cite{V}). Here  we recall that a Hermitian metric is called {\em strong K\"ahler with torsion} (SKT) if its fundamental form is $\partial \bar\partial$-closed (see  for instance \cite{FT,Cavalcanti} and the references therein).
SKT nilmanifolds were first  studied in \cite{FPS}  in six dimension and recently in \cite{EFV} in any dimension, where by {\em nilmanifold} we mean a compact quotient of a simply connected nilpotent Lie group $G$ by a co-compact lattice $\Gamma$.  Very few results are known for   the existence of SKT  metrics on solvmanifolds endowed with an invariant complex structure. By {\em solvmanifold}  $G/\Gamma$ we mean a  compact quotient of  a simply connected solvable Lie group $G$  by  a  lattice $\Gamma$  and by  {\em invariant complex structure} on  $G/\Gamma$  we mean a complex structure induced by a left invariant complex structure on $G$. We will  call  a solvmanifold endowed with an invariant complex structure a  {\em complex solvmanifold}.

\medskip

From \cite{EFV}   it is known that  a nilmanifold $G/\Gamma$  endowed with  an  invariant complex structure $J$  cannot admit any symplectic  form taming $J$  unless it  admits a K\"ahler structure (or equivalently  $G/\Gamma$ is a complex torus). Then  it is quite  natural trying to extend the result  to complex  solvmanifolds.

 By \cite{Hasegawa} a  solvmanifold  $G/\Gamma$ admits a K\"ahler structure if and only if  it is a  finite quotient of a complex torus. This in particular implies that when $G$ is not of  type (I) and non abelian, then $G/\Gamma$ is not K\"ahler. We recall that being of  type (I)  means that  for any $X\in \g$  all eigenvalues of the adjoint operator $ad_{X}$ are pure imaginary.

Given a solvable Lie algebra $\g$ we denote by $\n$ its {\em nilradical} which is defined as the {\em maximal nilpotent ideal} of $\g$.
It is well known that  there always exists  a {\em nilpotent complement} $\mathfrak c$ of $\frak n$  in $\frak g$, i.e. there exists a nilpotent subalgebra $\frak c$ of $\g$ such that $\g=\mathfrak c+\n$ (see \cite[Theorem 2.2]{Dek}).
In general the complement $\mathfrak c$ is not unique and we do not expect to have a direct sum between $\mathfrak c$ and $\n$.

\medskip
The first main result of the paper consists in proving the following theorem about the non-existence of Hermitian-symplectic and SKT structures  on homogeneous spaces of splitting Lie groups.

\begin{theorem}\label{spskh}
Let  $G$  be  a   Lie group endowed with a left-invariant complex structure $J$  and suppose that

\begin{enumerate}

\item[1)]  the Lie algebra  $\g$ of $G$  is a semidirect  product  $\g=\frak s\ltimes_{\phi}\frak h$, where $\frak s$ is  a solvable Lie algebra and $\frak h$ a Lie algebra;

 \item[2)] $\phi:\frak s\to {\mbox {Der}} \,  (\frak h)$ is a representation on the space of derivations of $\frak h$;

\item[3)] $\phi$ is  not of   type  (I) and the  image $\phi(\frak s)$ is a nilpotent subalgebra of ${\mbox {Der}} \,  (\frak h)$;

\vspace{0.1cm}
\item[4)] $J(\frak h)\subset  {\frak h}$;

\vspace{0.1cm}
\item[5)] $J_{\vert\frak h}\circ \phi(X)=\phi(X)\circ J_{\vert\frak h}$ for any $X\in \frak s$.
\end{enumerate}
Then $\g$ does not admit any  symplectic structure taming $J$.
\noindent Moreover if $\frak s$ is nilpotent and $J(\frak s)\subset \frak s$,
then $\g$ does not admit any $J$-Hermitian  SKT metric.

\end{theorem}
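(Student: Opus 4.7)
The strategy is to assume the existence of the claimed structure and, out of it, build an infinite sequence of eigenvectors of a single operator $\phi(X_0)$ acting on $\frak h^{1,0}$ whose eigenvalues are pairwise distinct. This will contradict $\dim_{\C}\frak h^{1,0}<\infty$.

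\emph{Step 1 (choice of eigenvector).} Since $\phi$ is not of type (I), pick $X_0\in\frak s$ and an eigenvalue $\lambda\in\C$ of $\phi(X_0)$ with $\mathrm{Re}(\lambda)\neq 0$. By condition (5), $\phi(X_0)$ is $J$-linear, hence preserves $\frak h^{1,0}$, and its eigenvalues on $\frak h^{1,0}$ and $\frak h^{0,1}$ are complex conjugates; thus we may take a nonzero $u\in\frak h^{1,0}$ with $\phi(X_0)u=\lambda u$ (replacing $\lambda$ by $\bar\lambda$ if necessary, which does not change the nonvanishing of the real part).

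\emph{Step 2 (the key identity).} Assume first that $\Omega$ is a symplectic form taming $J$. Using $[X_0,u]=\lambda u$ and $[X_0,\bar u]=\bar\lambda\bar u$, the Chevalley--Eilenberg expansion of $d\Omega(X_0,u,\bar u)=0$ yields
\[
0=-\lambda\,\Omega(u,\bar u)-\bar\lambda\,\Omega(u,\bar u)+\Omega(X_0,[u,\bar u]),
\]
i.e.\ $\Omega(X_0,[u,\bar u])=2\,\mathrm{Re}(\lambda)\,\Omega(u,\bar u)$. Because $u\wedge\bar u$ has pure bidegree $(1,1)$, $\Omega(u,\bar u)=\omega(u,\bar u)\neq 0$ by the taming condition; since also $\mathrm{Re}(\lambda)\neq 0$, we deduce $[u,\bar u]\neq 0$.

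\emph{Step 3 (iteration).} Since $\overline{[u,\bar u]}=-[u,\bar u]$, the bracket $[u,\bar u]$ lies in $i\,\frak h$, so its $(1,0)$-projection $u^{(1)}:=[u,\bar u]^{1,0}$ is a nonzero element of $\frak h^{1,0}$. Because $\phi(X_0)$ is a derivation and commutes with $J$,
\[
\phi(X_0)[u,\bar u]=[\lambda u,\bar u]+[u,\bar\lambda\bar u]=2\,\mathrm{Re}(\lambda)\,[u,\bar u],
\]
and this identity descends to the $(1,0)$-part to give $\phi(X_0)u^{(1)}=2\,\mathrm{Re}(\lambda)\,u^{(1)}$. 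Since $\mathrm{Re}(2\,\mathrm{Re}(\lambda))=2\,\mathrm{Re}(\lambda)\neq 0$, the hypothesis of Step 2 is preserved. Iterating with $u^{(k+1)}:=[u^{(k)},\overline{u^{(k)}}]^{1,0}$ produces a sequence of nonzero eigenvectors of $\phi(X_0)$ in $\frak h^{1,0}$ with eigenvalues $2^{k}\,\mathrm{Re}(\lambda)$, which are pairwise distinct. This contradicts the finite-dimensionality of $\frak h^{1,0}$.

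\emph{Step 4 (SKT case).} For the ``moreover'' statement, one runs the same iteration starting from a candidate $J$-Hermitian SKT metric $\omega$. The stronger assumptions $\frak s$ nilpotent and $J(\frak s)\subset\frak s$ allow the splitting $X_0=X_0^{1,0}+X_0^{0,1}$ inside $\frak s_{\C}$. One then replaces the 3-form identity $d\Omega=0$ of Step~2 by the SKT 4-form identity $\partial\bar\partial\omega=0$, evaluated on the tuple $(X_0^{1,0},X_0^{0,1},u,\bar u)$, which has pure bidegree $(2,2)$. A Chevalley--Eilenberg expansion, simplified using the eigenvector relations $\phi(X_0^{1,0})u=\mu_A u$, $\phi(X_0^{0,1})u=\mu_B u$ (with $\mu_A+\mu_B=\lambda$, valid after replacing $u$ by a common weight vector of the nilpotent representation $\phi(\frak s)\curvearrowright\frak h^{1,0}$), again produces an equation of the shape ``$\omega$-pairing involving $[u,\bar u]$ equals $2\,\mathrm{Re}(\lambda)\,\omega(u,\bar u)\neq 0$'', so that $[u,\bar u]\neq 0$. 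The iteration of Step~3 then finishes the argument.

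\emph{Main obstacle.} The technical core is Step~4: the Chevalley--Eilenberg expansion of $\partial\bar\partial\omega(X_0^{1,0},X_0^{0,1},u,\bar u)=0$ has six bracket terms, several of which involve the inner-$\frak s$ commutator $[X_0^{1,0},X_0^{0,1}]\in\frak s_{\C}$. The nilpotency of $\frak s$ together with condition (5) must be used to control these terms and to isolate the single contribution $\omega(\cdot,[u,\bar u]^{\bullet})$ that drives the conclusion $[u,\bar u]\neq 0$.
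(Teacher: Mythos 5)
Your argument for the first assertion (no taming symplectic form) is correct and takes a genuinely different route from the paper. The paper first decomposes $\frak h_{\C}$ into generalized weight spaces for the semisimple part of $\phi$, invokes Lemma \ref{kll22} to locate a character $\alpha_1$ with $\mathrm{Re}(\alpha_1)\neq 0$ and $[V_{\alpha_1}(\frak h_\C),V_{\bar\alpha_1}(\frak h_\C)]=0$, and then shows, by bookkeeping where $d$ sends each wedge of weight spaces (condition $(\star_1)$), that a closed $2$-form can have no component along $e^{p}\wedge\bar e^{p}$. You instead evaluate $d\Omega(X_0,u,\bar u)=0$ directly on a single eigenvector $u\in\frak h^{1,0}$ and bootstrap: taming forces $[u,\bar u]\neq 0$, its $(1,0)$-part is again an eigenvector with eigenvalue $2\,\mathrm{Re}(\lambda)$, and finite-dimensionality kills the chain. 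In effect you have fused the iteration inside the proof of Lemma \ref{kll22} with the closedness computation; your version is more elementary (no triangularization, no $(\star_1)$), and the delicate points I checked --- the Chevalley--Eilenberg signs, $\overline{[u,\bar u]}=-[u,\bar u]$, and $\Omega(u,\bar u)\neq 0$ for nonzero $u\in\frak g^{1,0}$ from the taming condition --- are all fine.

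The SKT half is where there is a genuine gap. Step 4 is a plan, not a proof: you explicitly defer the expansion of $\partial\bar\partial\omega(X_0^{1,0},X_0^{0,1},u,\bar u)$, and that expansion contains terms such as $\omega([X_0^{1,0},X_0^{0,1}],[u,\bar u])$, $\omega([[X_0^{1,0},X_0^{0,1}],u],\bar u)$, and pairings of $\frak s_{\C}$ with $\frak h_{\C}$ that are not controlled by the eigenvector relations, so there is no reason the identity collapses to the clean scalar equation you want. The paper's mechanism here is essentially cohomological: after triangularizing, the coefficient of $e^{p}\wedge\bar e^{p}$ in $dd^{c}(e^{p}\wedge\bar e^{p})$ is the $2$-form $dJ\theta-J\theta\wedge\theta$ on $\frak s$ (with $\theta=\delta+\bar\delta$), and its nonvanishing is exactly Lemma \ref{nilsk}, i.e.\ Dixmier's theorem that the $(d+\theta\wedge)$-cohomology of a nilpotent Lie algebra is trivial. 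This is where the nilpotency of $\frak s$ enters, and it yields a nonzero $2$-form on $\frak s$, not a nonzero number: one must then test against a pair of vectors of $\frak s$ on which that $2$-form does not vanish, and nothing guarantees that $(X_0,JX_0)$ is such a pair, so your fixed choice of test vectors can return $0=0$. (That some extra input beyond Steps 1--3 is unavoidable is shown by the Oeljeklaus--Toma surface in the paper's examples: there $\phi$ is not of type (I), $[u,\bar u]=0$, and yet SKT metrics exist because $\frak s$ is not nilpotent.) To close Step 4 you would need to quantify over all pairs in $\frak s_{\C}$ and import the nonvanishing statement of Lemma \ref{nilsk} or an equivalent; as written the sketch contains neither ingredient.
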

The previous theorem can be in particular applied to compact homogeneous complex spaces of the form $(G/\Gamma,J)$, where $(G,J)$ satisfies conditions $1), \ldots, 5)$ in the theorem and $\Gamma$ is a discrete  subgroup  of $G$. This type of homogeneous spaces covers a large class of examples including  the so-called Oeljeklaus-Toma manifolds (see \cite{OT}).

\medskip
In general  a simply connected solvable Lie group is not of splitting type (i.e. its Lie algebra does not satisfy conditions $1), 2), 3)$ of Theorem \ref{spskh}). The following theorem provides a non-existence result in the non-splitting case.

\begin{theorem}\label{th4.2}
Let $(G/\Gamma,J)$ be a complex solvmanifold.
Assume that $J$ is invariant under the action of  a nilpotent complement of the nilradical $\frak n$.
Then  $G/\Gamma$ admits a   symplectic form taming $J$  if and only if  $(G/\Gamma,J)$  is   K\"ahler.
\end{theorem}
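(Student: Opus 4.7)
My plan is to reduce the non-splitting case to Theorem \ref{spskh} by constructing an associated splitting Lie algebra, after first passing to invariant data by symmetrization. Since $G$ is solvable and admits a lattice, it is unimodular, and $G/\Gamma$ carries a finite $G$-invariant volume. If $\Omega$ is any symplectic form on $G/\Gamma$ taming $J$, averaging $\Omega$ against this volume yields a closed, left-$G$-invariant $2$-form $\bar\Omega$ whose pointwise $(1,1)$-part with respect to $J$ is still positive, since the taming condition is pointwise convex and $J$ is $G$-invariant. Hence it suffices to exclude a closed, invariant, $J$-taming element of $\Lambda^2\mathfrak g^\ast$ whenever $(G/\Gamma,J)$ is non-K\"ahler.

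Next I would introduce the relevant splitting. Define $\phi\colon\mathfrak c\to\mathrm{Der}(\mathfrak n)$ by $\phi(X)=\mathrm{ad}_X|_{\mathfrak n}$; this is a Lie algebra homomorphism with nilpotent image, since $\mathfrak c$ is nilpotent. Form the external semidirect product $\widetilde{\mathfrak g}:=\mathfrak c\ltimes_\phi\mathfrak n$, together with the surjective Lie algebra homomorphism $p\colon\widetilde{\mathfrak g}\to\mathfrak g$, $(X,Y)\mapsto X+Y$, whose kernel is $\mathfrak k=\{(Z,-Z):Z\in\mathfrak c\cap\mathfrak n\}$. Using the $\mathfrak c$-invariance of $J$, one verifies that $\mathfrak n$ is $J$-invariant and constructs an almost complex structure $\widetilde J$ on $\widetilde{\mathfrak g}$ which is $p$-related to $J$, preserves $\mathfrak n\subset\widetilde{\mathfrak g}$, and commutes with every $\phi(X)$. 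In this way $(\widetilde{\mathfrak g},\widetilde J)$ meets conditions 1), 2), 4), 5) of Theorem \ref{spskh}.

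Pulling back $\bar\Omega$ by $p$ then produces a closed $2$-form $p^\ast\bar\Omega$ whose $(1,1)$-part degenerates precisely on $\mathfrak k$; using that $\mathfrak k$ sits inside the nilradical of $\widetilde{\mathfrak g}$ and is $\widetilde J$-compatible, one modifies $p^\ast\bar\Omega$ into a closed symplectic form $\widetilde\Omega$ taming $\widetilde J$. An eigenvalue analysis, based on the fact that the adjoint action of every element of $\mathfrak n$ is nilpotent on $\mathfrak g$, shows that whenever $\mathfrak g$ fails to be of type (I) the representation $\phi$ also fails condition 3) of Theorem \ref{spskh}; that theorem then contradicts the existence of $\widetilde\Omega$. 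Hence $\mathfrak g$ must be of type (I), and combining this with Hasegawa's classification \cite{Hasegawa} together with the additional hypothesis that $\mathfrak c$ acts holomorphically forces $(G/\Gamma,J)$ to be K\"ahler.

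The main obstacle I anticipate lies in the lifting step $J\mapsto\widetilde J$: because the decomposition $\mathfrak g=\mathfrak c+\mathfrak n$ is generally not a direct sum, there is no canonical lift, and one must carefully pick a vector space complement of $\mathfrak c\cap\mathfrak n$ in $\mathfrak c$ so that the induced $\widetilde J$ squares to $-\mathrm{id}$, is compatible with $p$, preserves $\mathfrak n$ inside $\widetilde{\mathfrak g}$, and remains integrable. An analogous subtlety appears in modifying $p^\ast\bar\Omega$ into a non-degenerate closed $2$-form on $\widetilde{\mathfrak g}$ without losing the taming of $\widetilde J$. Both of these compatibility issues — intrinsic to the non-triviality of $\mathfrak c\cap\mathfrak n$ — must be handled in order to reduce the argument cleanly to Theorem \ref{spskh}.
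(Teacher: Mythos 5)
Your reduction breaks down in two places, and the second one is fatal. First, the passage to the external semidirect product $\widetilde{\g}=\frak c\ltimes_\phi\frak n$ introduces the kernel $\frak k=\{(Z,-Z):Z\in\frak c\cap\frak n\}$, on which every $2$-form pulled back from $\g$ is identically degenerate; the promised ``modification'' of $p^\ast\bar\Omega$ into a \emph{closed} form taming $\widetilde J$ is exactly the hard point and is not supplied (a closed correction term that is positive on $\frak k$ without spoiling positivity elsewhere need not exist). The paper avoids this entirely: instead of enlarging the algebra, it proves Proposition \ref{Lieta} directly on $\g$, choosing a vector-space complement $\frak a$ of $\frak n$ inside $\frak c$ and running the character-space decomposition $\g_\C={\frak a}_\C\oplus V_{\bf 0}(\n_\C)\oplus V_{\alpha_1}(\g_\C)\oplus\dots\oplus V_{\alpha_n}(\g_\C)$ to exhibit, for every closed invariant $2$-form $\Omega$, a nonzero $X$ with $\Omega(X,JX)=0$ whenever $\g$ is not of type (I). (Your symmetrization step, reducing to invariant forms, agrees with the paper, which cites \cite{EFV} for it.)

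The fatal gap is the type (I) case. Even if the reduction to Theorem \ref{spskh} were repaired, your argument only shows that $\g$ must be of type (I); you then assert that Hasegawa's classification together with the holomorphic $\frak c$-action ``forces $(G/\Gamma,J)$ to be K\"ahler.'' It does not: type (I) plus a holomorphic $\frak c$-action is far from sufficient (the Kodaira--Thurston nilmanifold is of type (I), carries an invariant complex structure commuting with the trivial complement, and is non-K\"ahler), and at this point you have stopped using the taming symplectic form altogether. The paper's proof spends most of its effort precisely here: using $J\circ Ad_{s}=Ad_{s}\circ J$ it passes to the nilshadow $U_G$ of $G$, obtains a biholomorphism between $(G/\tilde\Gamma,J)$ and the nilmanifold $(U_G/\tilde\Gamma,\tilde J)$ for a finite-index subgroup $\tilde\Gamma\subset\Gamma$, transports the taming form there, invokes the nilmanifold theorem of \cite{EFV} to conclude that $U_G/\tilde\Gamma$ is a complex torus, and finally applies \cite{BR} to deduce that the finite quotient $(G/\Gamma,J)$ is K\"ahler. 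Without this step, or some substitute that actually exploits the Hermitian-symplectic structure in the type (I) regime, your proposal establishes only the non-existence statement for $\g$ not of type (I), not the theorem.
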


A special class of invariant complex structures on solvmanifolds is provided by {\em abelian complex structures} (see \cite{BDM}).
A complex structure $J$ on a  Lie algebra $\frak g$   is called {\em abelian} if $[JX,JY]=[X,Y]$ for every  $X,Y\in \g$. In the abelian case  the Lie  subalgebra $\g^{1,0}$ of   the complexification $\frak g_{\C}$ of  $\frak g$  is   abelian   and that motivates the name.   In Section \ref{abilian} we will prove the following

\begin{theorem}\label{thABILIAN}
Let $(G/\Gamma, J)$ be a solvmanifold  endowed with an invariant  {\em abelian} complex structure $J$. Then  $(G/\Gamma, J)$  doesn't admit a symplectic form taming $J$  unless it is a complex torus.
\end{theorem}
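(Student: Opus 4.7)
My plan is to reduce the problem to the Lie-algebraic level and then bring it under the scope of Theorem \ref{th4.2} by exploiting the rigid structure imposed by abelianness of $J$. First, since $G$ admits a lattice it is unimodular, and the standard symmetrization of $\Omega$ over $G/\Gamma$ against the Haar measure produces a left-invariant closed $2$-form whose $(1,1)$-part is still positive definite (positivity is preserved under averaging). Hence it suffices to prove the purely Lie-algebraic statement that a solvable Lie algebra $\g$ endowed with an abelian complex structure $J$ and a left-invariant taming symplectic form must be abelian.

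Combining $[JX,JY]=[X,Y]$ with the integrability $N_J=0$ gives the complementary identity $[JX,Y]=-[X,JY]$, equivalently that $\g^{1,0}\subset \g_{\C}$ is an abelian complex subalgebra. In terms of the Chevalley--Eilenberg differentials this means $\partial\alpha\in\Lambda^{2,0}\g^*$ vanishes for every $\alpha\in\Lambda^{1,0}\g^*$, and hence by Leibniz $\partial\equiv 0$ on the whole column $\Lambda^{\bullet,0}\g^*$; in particular, writing $\Omega=\sigma+\omega+\bar\sigma$ with $\omega=\Omega^{1,1}>0$ and $\sigma\in\Lambda^{2,0}\g^*$, the closedness $d\Omega=0$ collapses to the single equation $\partial\omega+\bar\partial\sigma=0$.

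The core step is to show that the nilradical $\n$ is $J$-invariant and that one may choose a nilpotent complement $\mathfrak c$ of $\n$ on which the adjoint action commutes with $J$. The $J$-invariance of $\n$ follows from the identity $ad_X\circ J=-ad_{JX}$, itself immediate from $[JX,Y]=-[X,JY]$: it pairs the spectrum of $ad_X$ with that of $ad_{JX}$ and thus forces the set of $ad$-nilpotent elements to be $J$-stable. For the complement I would exploit the abelianness of $\g^{1,0}$ to carry out a Mal'cev-type construction yielding $\mathfrak c$ compatible with $J$ in the sense required by Theorem \ref{th4.2}; the theorem then forces $(G/\Gamma,J)$ to be K\"ahler, and Hasegawa's classification identifies it with a complex torus, as claimed.

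The delicate point is precisely the construction of this $J$-equivariant nilpotent complement: neither $J$-invariance of $\mathfrak c$ nor the commutation of $ad_X$ with $J$ for $X\in\mathfrak c$ is automatic for a generic choice, and the interplay between the solvable splitting $\g=\mathfrak c+\n$ and the type decomposition $\g_{\C}=\g^{1,0}\oplus\g^{0,1}$ has to be used carefully --- possibly by selecting $\mathfrak c$ inside a $J$-invariant complement of $[\g,\g]$ and adjusting it by an inner automorphism to achieve the desired commutation with $J$. This structural lemma, which genuinely uses abelianness and not just integrability of $J$, is the real content of the theorem; once it is in hand, the conclusion is immediate from Theorem \ref{th4.2} and Hasegawa's result.
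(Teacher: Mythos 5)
There is a genuine gap: your entire argument funnels into a structural lemma --- the existence of a nilpotent complement $\mathfrak c$ of the nilradical $\frak n$ with $ad_C\circ J=J\circ ad_C$ for all $C\in\mathfrak c$ --- which you explicitly leave unproven. That lemma is not a technical footnote; it is the whole theorem in disguise, and nothing in the proposal makes it plausible. Worse, the preliminary step is already broken: you claim $J(\frak n)\subset\frak n$ because $ad_{JX}=-ad_X\circ J$ ``pairs the spectra'', but the spectrum of $ad_X\circ J$ bears no useful relation to that of $ad_X$ (a nilpotent operator composed with an invertible one need not be nilpotent). Concretely, take ${\rm aff}(\C)=\C\oplus\C$ with bracket $[(x,y),(x',y')]=(0,xy'-x'y)$ and the abelian structure $J(x,y)=(y,-x)$: here $\frak n=0\oplus\C$ and $J(0,y)=(y,0)\notin\frak n$, so the nilradical of a Lie algebra with abelian complex structure need not be $J$-invariant. (This example is not unimodular, so it does not contradict the theorem, but it kills your spectral argument, which never invoked unimodularity.) Finally, even if the reduction to Theorem \ref{th4.2} succeeded, Hasegawa's result only yields that $G/\Gamma$ is a \emph{finite quotient} of a complex torus, not a complex torus; to get the stated conclusion one needs to know that $\g$ itself is abelian.

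For comparison, the paper does not go through Theorem \ref{th4.2} at all. It proves the purely algebraic statement that a unimodular $\g$ with abelian $J$ and a taming $2$-form is abelian, working with the $J$-invariant ideal $\g^1+J\g^1$ (both summands are abelian subalgebras). When $\g^1+J\g^1=\g$ one gets $\g\cong{\rm aff}(\mathcal A)$ for a commutative associative algebra $\mathcal A$, and unimodularity together with the Burde--de Graaf classification forces $\mathcal A$ nilpotent, hence $\g$ nilpotent, hence abelian by \cite{EFV}. When $\g^1+J\g^1\neq\g$ one argues by induction and shows $[\frak h,\g^1+J\g^1]=0$ for a $J$-invariant complement $\frak h$ via the symmetric bilinear forms $B_X(Y,Z)=\Omega([X,Y],Z)$; the taming hypothesis enters exactly at the identity $\Omega([X,JY],J[X,JY])=\Omega([X,Y],[JX,Y])=0$, which forces $[X,JY]=0$. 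Your proposal never engages the taming condition beyond averaging, which is a strong sign the strategy cannot close.
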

In the last  section of the paper we take into account solvmanifolds  $G/\Gamma$  with $G$ {\em almost-abelian}. The almost-abelian condition means that the nilradical  $\frak n$ of the Lie algebra  $\g$ of $G$ has codimension $1$ and  $\frak n$  is abelian.   About this case we will prove the following

\begin{theorem}\label{almost-Abilian}
Let $(G/\Gamma,J)$ be a complex solvmanifold with $G$   almost-abelian. Assume $\g$  being  either not  of  type (I)  or $6$-dimensional. Then
 $(G/\Gamma,J)$ does not admit any symplectic form taming $J$.
\end{theorem}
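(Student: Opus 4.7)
The first step is to fix an adapted basis $\{e_1,\ldots,e_{2n}\}$ of $\g$ with $Je_{2k-1}=e_{2k}$ and with nilradical $\n=\operatorname{span}\{e_i : i\neq 2\}$, which is always possible because $\dim\n=2n-1$ is odd and hence $J\n\not\subset\n$. The subspace $\n_1:=\n\cap J\n=\operatorname{span}\{e_3,\ldots,e_{2n}\}$ is a $J$-stable abelian ideal of codimension two in $\g$, and writing $A:=\operatorname{ad}(e_2)|_\n$ in the splitting $\n=\R e_1\oplus\n_1$, the integrability of $J$ forces
\[
A=\begin{pmatrix}a & 0 \\ q & A_1\end{pmatrix},\qquad A_1\,J|_{\n_1}=J|_{\n_1}\,A_1,
\]
with $a\in\R$, $q\in\n_1$, $A_1\in\operatorname{End}(\n_1)$; existence of the lattice $\Gamma$ moreover imposes the unimodularity $a+\tr A_1=0$.

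In the non-type-(I) case my plan is to apply Theorem~\ref{spskh} to the splitting $\g=\frak s\ltimes\n_1$, taking $\frak s$ a two-dimensional subalgebra of $\g$ complementary to $\n_1$. Such an $\frak s$ exists in the form $\operatorname{span}(e_1+v,e_2)$ whenever $v\in\n_1$ solves $(aI-A_1)v=q$: this is automatic when $a\notin\operatorname{spec}(A_1)$, and in the borderline case $a\in\operatorname{spec}(A_1)$ I would first exploit the exactness of $a\,\alpha^1\wedge\alpha^2=d\alpha^1$ together with the standard positivity $\int_{G/\Gamma}\Omega^{n-1}\wedge\alpha^1\wedge\alpha^2>0$ valid for any taming $\Omega$, which forces the obstructing component of $q$ in the cokernel of $aI-A_1$ to vanish. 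Conditions 1)--2), 4)--5) of Theorem~\ref{spskh} are built into this construction with $\phi(e_2)=A_1$, and condition~3) is where the hypothesis enters: since $A_1$ is $J|_{\n_1}$-linear its complex eigenvalues come in conjugate pairs, so if they all lay on $i\R$ one would have $\tr A_1=0$, forcing $a=0$ by unimodularity and $\g$ of type~(I), contrary to assumption. Thus $\phi(\frak s)=\R A_1$ is nilpotent but not of type~(I), and Theorem~\ref{spskh} rules out a taming symplectic form.

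For $\dim\g=6$ the only case not covered above is $\g$ of type~(I), so $a=0$ and $A_1\in\operatorname{End}_\C(\C^2)$ has purely imaginary $\C$-spectrum. Here I would instead invoke Theorem~\ref{th4.2} by producing a nilpotent complement $\frak c=\R(e_2+w)$ of $\n$ under which $J$ is invariant; writing $w=v_1e_1+v'$ with $v'\in\n_1$, the equation $[\operatorname{ad}(e_2+w),J]=0$ reduces, once $a=0$, to the linear system $A_1v'=-(v_1I+J)q$. The short list of $\C$-Jordan normal forms of $A_1$ on $\C^2$ compatible with a purely imaginary spectrum (invertible; nilpotent; or the direct sum of an invertible $i\nu$-block and a zero block) allows one to solve this system in each case: one uses in the mixed sub-case the same positivity/Stokes argument as above to eliminate the $\ker(A_1)$-component of $q$, whereas in the fully nilpotent sub-case the Lie algebra $\g$ itself turns out to be nilpotent, so that $(G/\Gamma,J)$ is a six-dimensional nilmanifold and the non-existence result of \cite{EFV} applies directly. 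Once $w$ is at hand, Theorem~\ref{th4.2} forces $(G/\Gamma,J)$ to be K\"ahler and hence, by \cite{Hasegawa}, a finite quotient of a complex torus, contradicting the non-abelianness forced by the almost-abelian hypothesis. The main obstacle throughout is solvability of the two auxiliary linear systems in the degenerate rank-drop regimes of $aI-A_1$ and of $A_1$, overcome precisely by the interplay of the integration-by-parts/positivity argument with the $J$-invariance of the (generalised) eigenspaces and with unimodularity.
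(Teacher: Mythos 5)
Your strategy---reducing the non-type-(I) case to Theorem \ref{spskh} via the splitting $\g=\frak s\ltimes\n_1$ with $\n_1=\n\cap J\n$, and the $6$-dimensional type-(I) case to Theorem \ref{th4.2} via a $J$-commuting nilpotent complement---is a genuinely different route from the paper's. The paper argues directly: in the non-type-(I) case it uses the metric induced by $\Omega^{1,1}$ to produce an $ad_X$- and $J$-invariant complement $\frak h$ of ${\rm span}_{\R}\langle X,JX\rangle$, decomposes $\frak h_{\C}$ into character spaces and shows that $x\wedge\xi\wedge\bar\xi$ occurs only in $d(\xi\wedge\bar\xi)$ for a suitable eigen-covector $\xi$ with $\operatorname{Re}(a_i)\neq 0$, so every closed $2$-form degenerates; in the $6$-dimensional type-(I) case it analyses $Y=[X,JX]$ and $[X,Y]$ by hand via explicit structure equations. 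Your set-up is correct in the non-resonant regimes: $\n_1$ is indeed a $J$-stable abelian ideal, integrability forces $A_1$ to commute with $J|_{\n_1}$, and unimodularity plus not-type-(I) does force $A_1$ to have an eigenvalue off $i\R$, so hypotheses 1)--5) of Theorem \ref{spskh} hold once $\frak s$ exists.

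The gap is precisely in the resonant cases, which is where the paper's proof does its real work. When $a\in\operatorname{spec}(A_1)$ and $q\notin\operatorname{im}(aI-A_1)$ the subalgebra $\frak s$ does not exist (changing the representatives $e_1,e_2$ only moves $q$ within its class modulo $\operatorname{im}(aI-A_1)$), and your fallback is not a proof: the inequality $\int_{G/\Gamma}\Omega^{n-1}\wedge\alpha^1\wedge\alpha^2>0$ is asserted as ``standard'' but requires an argument (one must control the $(\Omega^{2,0})^{k}(\Omega^{0,2})^{k}(\Omega^{1,1})^{n-1-2k}$ contributions), and if it did hold it would, combined with the exactness $\alpha^1\wedge\alpha^2=\tfrac1a\,d\alpha^1$ and Stokes, give $0>0$ outright; it cannot ``force a component of $q$ to vanish,'' since $q$ is a structure constant of $\g$ independent of $\Omega$---the correct conclusion of such an argument would be the nonexistence of $\Omega$, rendering the subalgebra construction superfluous, and you would still owe the case $a=0\in\operatorname{spec}(A_1)$ where $\alpha^1$ is closed and nothing is exact. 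The same problem is fatal in the $6$-dimensional type-(I) branch: there $a=0$, so ``the same positivity/Stokes argument'' has no exact form to integrate, yet this ($q$ with a nonzero component in $\ker A_1$, i.e.\ the paper's sub-cases $Y=[X,JX]\neq 0$) is exactly what the paper resolves by writing out the structure equations and observing that $z\wedge x\wedge Jy$ appears only in $d(z\wedge Jz)$. Finally, your concluding step in the type-(I) branch is a logical error: being a finite quotient of a complex torus does \emph{not} contradict $G$ being non-abelian (hyperelliptic surfaces are finite torus quotients with non-abelian almost-abelian $G$); accordingly the paper's own Theorem in Section 6 concludes only that $G/\Gamma$ is K\"ahler in that branch, not that no taming form exists.
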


\noindent
 {\em{Acknowledgements}}. The authors are grateful to Weiyi Zhang for useful observations on the previous version of the present paper. 

\section{Preliminary results on representations of Lie algebras}
In this section we prove some preliminary results which will be useful in the sequel.
\subsection{Representations of solvable Lie algebras}
Let $\frak g$ be a solvable Lie algebra and
let $\rho:\frak g\to \mathop{\rm End} (V)$ be a representation on a real vector space $V$ whose image $\rho(\g)$ is a nilpotent subalgebra of $\mathop{\rm End} (V)$.
For every $X\in \frak g$  we can consider the Jordan decomposition
$$
\rho(X)=(\rho(X))_{s}+(\rho(X))_{n}
$$
which induces two maps $\rho_s$ and $\rho_n$ from $\g$ onto $\mathop{\rm End}(V)$.
The following facts can be easily deduced from \cite{dek}:
\begin{itemize}
\item  The maps $\rho_{s}:\g\ni X\mapsto (\rho(X))_{s}\in \mathop{\rm End} (V)$ and $\rho_{n}:\g\ni X\mapsto (\rho(X))_{n}\in \mathop{\rm End} (V)$ are Lie algebra homomorphisms.

\vspace{0.2cm}
\item The images $\rho_{s}(\g)$ and $\rho_{n}(\g)$ are subalgebras of $\mathop{\rm End} (V)$ satisfying $[\rho_{s}(\g),\rho_{n}(\g)]=0$.

\end{itemize}
For a real-valued character $\alpha$ of $\g$, we denote
$$
V_{\alpha}(V)=\{v\in V\,\,:\,\,  \rho_{s}(X)v=\alpha(X)v\mbox{ for every }X\in \frak g \},
$$
and for a complex-valued character $\alpha$ of $\g$ we set
$$
V_{\alpha}(V_{\C})=\{v\in V_{\C}\,\,:\,\, \rho_{s}(X)v=\alpha(X)v \mbox{ for every }X\in \frak g\}.
$$
When $\alpha$ is real we have $V_{\alpha}(V_\C)=V_{\alpha}(V)\otimes \C$.
From the condition  $[\rho_{s}(\g),\rho_{n}(\g)]=0$, we get
$$
\rho(X)\big(V_{\alpha}(V_\C)\big)\subset V_{\alpha}(V_\C),
$$
for any $X\in \frak g$ (see \cite{OV}).
Moreover, as a consequence of  the  Lie theorem, there exits a basis of $V_{\alpha}(V_\C)$ such that for any $X\in \frak c$ the map $\rho(X)$ is represented by an upper triangular matrix
\[\left(
\begin{array}{ccc}
\alpha& & \ast\\
&    \ddots&  \\
0&&\alpha
\end{array}
\right).
\]
Therefore we obtain  a decomposition
\[V_\C= V_{\alpha_{1}}(V_\C)\oplus \dots\oplus V_{\alpha_{n}}(V_\C)
\]
 with  $\alpha_{1},\dots,\alpha_{n}$  characters of $\frak g$.
Since $\rho$ is a real-valued representation, the set $\{\alpha_{1},\dots,\alpha_{n}\}$ is invariant under complex conjugation (i.e., $\bar\alpha_{i}\in \{\alpha_{1},\dots,\alpha_{n}\}$).
 We recall the following
\begin{defn}\label{Itype}{\rm 
A representation  $\rho$ of $\g$  is of {\em type (I)} if for any $X\in \frak g$  all the eigenvalues of  $\rho(X)$ are pure imaginary.}
\end{defn}

The following lemma will be very useful in the sequel:
\begin{lemma}\label{kll22}
Let $\frak h$ and $\g$ be Lie algebras with $\frak g$ solvable.
Let $\rho:\frak g\to D (\frak h)$ be a representation on the space of derivations on  $\frak h$ which we assume to not be of type (I). Then
there exists a complex character $\alpha $ of $\frak g$ satisfying
\begin{equation}\label{hisashicondition}
\mathop{\rm Re}  (\alpha)\not=0\,,\quad V_{\alpha}(\frak h_\C)\not=0 \mbox{ and }[V_{\alpha}(\frak h_\C), V_{\bar\alpha}(\frak h_\C)]=0\,.
\end{equation}
\end{lemma}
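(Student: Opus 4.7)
My plan is to argue by contradiction, combining a weight-space calculation for the bracket with a finite-dimensionality argument.

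First I would establish that the weight decomposition is compatible with the bracket on $\frak h$. Since $\rho(X)$ is a derivation of $\frak h$ for every $X\in\g$, I would invoke the classical fact that the semisimple and nilpotent parts of a derivation of a finite-dimensional Lie algebra in characteristic zero are again derivations; in particular $\rho_{s}(X)$ is a derivation of $\frak h$, and hence of $\frak h_{\C}$. Consequently, for $v\in V_{\alpha}(\frak h_{\C})$, $w\in V_{\beta}(\frak h_{\C})$ and $X\in\g$,
$$
\rho_{s}(X)[v,w]=[\rho_{s}(X)v,w]+[v,\rho_{s}(X)w]=(\alpha+\beta)(X)\,[v,w],
$$
so that
$$
[V_{\alpha}(\frak h_{\C}),V_{\beta}(\frak h_{\C})]\subset V_{\alpha+\beta}(\frak h_{\C}),
$$
the right-hand side being zero whenever $\alpha+\beta$ is not a weight.

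Since $\rho$ is not of type (I), the decomposition supplies at least one character $\alpha_{0}$ with $V_{\alpha_{0}}(\frak h_{\C})\ne 0$ and $Re(\alpha_{0})\ne 0$. I would then assume towards a contradiction that \eqref{hisashicondition} fails for every such $\alpha$, i.e.\ that $[V_{\alpha}(\frak h_{\C}),V_{\overline{\alpha}}(\frak h_{\C})]\ne 0$ whenever $V_{\alpha}(\frak h_{\C})\ne 0$ and $Re(\alpha)\ne 0$. Applied to $\alpha_{0}$, the displayed inclusion forces $V_{2Re(\alpha_{0})}(\frak h_{\C})\ne 0$; thus $\beta_{0}:=2Re(\alpha_{0})$ is a nonzero real character that is also a weight. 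Applying the assumption again to $\beta_{0}$, for which $\overline{\beta_{0}}=\beta_{0}$, yields $V_{2\beta_{0}}(\frak h_{\C})\ne 0$, and inductively $V_{2^{k}\beta_{0}}(\frak h_{\C})\ne 0$ for every $k\ge 0$. These characters are pairwise distinct because $\beta_{0}\ne 0$, contradicting $\dim_{\C}\frak h_{\C}<\infty$.

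The main obstacle is the first step: identifying $\rho_{s}(X)$ as a derivation of $\frak h$, since this is precisely what makes the weight decomposition behave as a grading for the Lie bracket. Once that is in hand the rest of the argument reduces to a pigeonhole iteration on the finitely many weights of $\rho_{s}$.
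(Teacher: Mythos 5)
Your proof is correct and follows essentially the same route as the paper's: both produce the desired character by iterating $\alpha\mapsto 2\,Re(\alpha)$ via the inclusion $[V_{\alpha}(\frak h_\C),V_{\bar\alpha}(\frak h_\C)]\subset V_{\alpha+\bar\alpha}(\frak h_\C)$ (which rests on $\rho_{s}(X)$ being a derivation) and terminate using the finite-dimensionality of $\frak h$. The only cosmetic differences are that you run the iteration as a contradiction argument and make the derivation property of the semisimple part explicit, which the paper leaves implicit.
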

\begin{proof}
Since  $\rho$ is assumed to be not of  (I) type, then there exits a complex character
$\alpha_{1}$ such that $\mathop{\rm Re} ( \alpha_{1})\not=0$ and $V_{\alpha_{1}}(\frak h_\C)\not=0$.
If $[V_{\alpha_{1}}(\frak h_\C),V_{\bar\alpha_{1}}(\frak h_\C)]=0$, then $\alpha_1$ satisfies the three conditions required.
Otherwise, since  $\rho_{s}:{\frak g}\ni X\mapsto (ad_{X}) _{s}\in D(\frak h)$, we have $0\not=[V_{\alpha_{1}}(\frak h_\C),V_{\bar\alpha_{1}}(\frak h_\C)]\subset V_{\alpha_{1}+\bar\alpha_{1}}(\frak h_\C)\not=0$ and we take
 $\alpha_{2}=\alpha_{1}+\bar\alpha_{1}=2\mathop{\rm Re} (\alpha_{1})$. Again
if $[V_{\alpha_{2}}(\frak h_\C),V_{\bar\alpha_{2}}(\frak h_\C)]=0$, then $\alpha_{2}$  satisfies all the conditions required, otherwise   we have $0\not=[V_{\alpha_{2}}(\frak h_\C),V_{\alpha_{2}}(\frak h_\C)]\subset V_{2\alpha_{2}}(\frak h_\C)\not=0$ and we
consider $\alpha_{3}=2\alpha_{2}$. We claim that we can iterate this operation until  we get a character $\alpha_k$ satisfying
\eqref{hisashicondition}. Indeed, since $\frak h$ is finite dimensional, we have a sequence of characters
\[\alpha_{2},\; \alpha_{3}=2\alpha_{2},\;\alpha_{4}=2\alpha_{3},\;\dots, \;\alpha_{k}=2\alpha_{k-1}\]
such that $V_{\alpha_{s}}(\frak h_\C)\not=0$ and  $[V_{\alpha_{s}}(\frak h_\C),V_{\alpha_{s}}(\frak h_\C)]\not= 0$ for $2\le s\le k-1$, and $V_{\alpha_{k}}(\frak h_\C)\not=0$ and  $[V_{\alpha_{k}}(\frak h_\C),V_{\alpha_{k}}(\frak h_\C)]= 0$.
Hence the claim follows.
\end{proof}

\subsection{Nilpotent complements of nilradicals of solvable Lie algebras}\label{sol}

Let $\g$ be a solvable Lie algebra with nilradical $\frak n$. As remarked in the introduction
there always exists a nilpotent subalgebra $\frak c$ of $\g$ such that $\g={\frak c}+{\frak n}$  (not necessarily a direct sum) (see \cite[Theorem 2.2]{Dek}).
Such a nilpotent subalgebra  $\frak c$  is called a {\em nilpotent complement} of $\frak n$.
Let us consider  $ad: {\frak c} \to  {\rm {Der}}(\g)$ and the  semisimple
$ad_{s}:{\frak c}\ni C\mapsto (ad_{C}) _{s}\in  {\rm {Der}} (\g)$ and the nilpotent part $ad_{n}:{\frak c}\ni C\mapsto (ad_{C}) _{n}\in {\rm {Der}} (\g)$ of $ad$.
Then $ad_{s}$ and $ad_{n}$ are homomorphisms from $\frak c$.
Since  $\ker ad_{s}=\frak c/\frak c\cap \frak n\cong \frak g/\frak n$, $ad_{s}$ can be  regarded as a homomorphism from $\g$.
For a real-valued character $\alpha$ of $\frak g$, we denote
$$
V_{\alpha}(\g)=\{X\in \g\,\,:\,\, ad_{sY}X=\alpha(Y)X \mbox{ for every }Y\in \g\},
$$
and for a complex-valued character $\alpha$,
\[V_{\alpha}(\g_\C)=\{X\in \g_\C\,\,:\,\,  ad_{sY}X=\alpha(Y)X\mbox{ for every }Y\in \g\}.
\]
If $\alpha$ is real valued we have $V_{\alpha}(\g_\C)=V_{\alpha}(\g)\otimes \C$.
Since $\frak c$ is nilpotent, we have   $ad_{C}(V_{\alpha}(\g_\C))\subset V_{\alpha}(\g_\C)$ for any $C\in \frak c$.
We can take a basis of $V_{\alpha}(\g_\C)$ such that $ad_{C}$ is represented as an upper triangular matrix
\[\left(
\begin{array}{ccc}
\alpha& & \ast\\
&    \ddots&  \\
0&&\alpha
\end{array}
\right),
\]
for any $C\in \frak c$. Then we obtain  a decomposition
\[\g_\C=V_{\bf 0}(\g_\C)\oplus V_{\alpha_{1}}(\g_\C)\oplus \dots\oplus V_{\alpha_{n}}(\g_\C)
\]
where $\bf 0$ is the trivial character and $\alpha_{1},\dots, \alpha_{n}$ are some non-trivial characters.
We also consider
\[\n_\C=V_{\bf 0}(\n_\C)\oplus V_{\alpha_{1}}(\n_\C)\oplus \dots\oplus V_{\alpha_{n}}(\n_\C)
.\]
Since $\frak c$ is nilpotent, $\frak c$ acts nilpotently on itself via $ad$.
Hence we have ${\frak c}\subset V_{\bf 0}(\g_\C)$ and $V_{\alpha_{i}}(\n_\C)=  V_{\alpha_{i}}(\g_\C)$ by $\g={\frak c}+{\frak n}$ for each $i$ and we get the decomposition
$$
\n_\C=V_{\bf 0}(\n_\C)\oplus V_{\alpha_{1}}(\g_\C)\oplus \dots\oplus V_{\alpha_{n}}(\g_\C).
$$
\begin{defn}
{\rm We say that a solvable Lie algebra   $\g$ is of type (I) if for any $X\in \g$  all the eigenvalues of the adjoint operator $ad_{X}$ are pure imaginary.}
\end{defn}
Note that if we write $\g=\mathfrak{c}+\mathfrak{n}$, where $\mathfrak c$ is an abelian complement of the nilradical $\frak n$, then $\g$ is  of type (I) if and only if the representation $ad:\frak c\to {\rm {Der}} (\n)$ is of type (I).
The following lemma is readily implied by Lemma \ref{kll22}.

\begin{lemma}\label{kll}
If $\g$ is a solvable Lie algebra which is not of type (I). Then
there exists a character $\alpha$ satisfying
$$
\mathop{\rm Re}  (\alpha)\not=0\,,\quad V_{\alpha}(\g_\C)\not=0\,,\mbox{ and }[V_{\alpha}(\g_\C), V_{\bar\alpha}(\g_\C)]=0.
$$
\end{lemma}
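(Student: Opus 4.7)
The plan is to deduce Lemma \ref{kll} as a direct consequence of Lemma \ref{kll22} by choosing the right data. Write $\g=\frak c+\frak n$ where $\frak n$ is the nilradical and $\frak c$ is a nilpotent complement, as in Subsection \ref{sol}. I will apply Lemma \ref{kll22} with ``their $\g$'' being our $\frak c$ (which is nilpotent, hence solvable) and ``their $\frak h$'' being our $\frak n$, with representation $\rho = ad\colon \frak c\to D(\frak n)$ (well-defined since $\frak n$ is an ideal). The image $ad(\frak c)$ is nilpotent since it is a quotient of the nilpotent Lie algebra $\frak c$, so the Jordan decomposition framework of Subsection 2.1 applies.

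First I would verify that the hypothesis ``$\rho$ is not of type (I)'' holds. This is exactly the observation recorded after the definition of type (I) for solvable Lie algebras: $\g$ is of type (I) if and only if $ad\colon\frak c\to \text{Der}(\frak n)$ is of type (I). Since by assumption $\g$ is not of type (I), the representation $\rho$ is not of type (I), so Lemma \ref{kll22} produces a complex character $\alpha$ of $\frak c$ satisfying
\[
\text{Re}(\alpha)\neq 0,\qquad V_\alpha(\frak n_\C)\neq 0,\qquad [V_\alpha(\frak n_\C),V_{\bar\alpha}(\frak n_\C)]=0.
\]

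Next I would upgrade $\alpha$ from a character of $\frak c$ to a character of $\g$. Because every $X\in\frak n$ has nilpotent adjoint action on $\g$ (this is the defining feature of the nilradical via Engel's theorem), we have $ad_s(X)=0$ for $X\in\frak c\cap\frak n$. Thus $\alpha$ vanishes on $\frak c\cap\frak n$ and descends along the isomorphism $\frak c/(\frak c\cap\frak n)\cong\g/\frak n$, then lifts to a character of $\g$ via the projection $\g\to\g/\frak n$; this is precisely the identification used to define $V_\alpha(\g_\C)$ in Subsection \ref{sol}.

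Finally I would translate the conclusion from $\frak n_\C$ back to $\g_\C$. Since $\text{Re}(\alpha)\neq 0$, in particular $\alpha$ is a nontrivial character, and by the decomposition
\[
\n_\C=V_{\mathbf 0}(\n_\C)\oplus V_{\alpha_1}(\g_\C)\oplus\dots\oplus V_{\alpha_n}(\g_\C)
\]
recorded in Subsection \ref{sol}, one has $V_\alpha(\n_\C)=V_\alpha(\g_\C)$ for every such nonzero $\alpha$. Therefore the three conditions obtained from Lemma \ref{kll22} translate verbatim into $V_\alpha(\g_\C)\neq 0$ and $[V_\alpha(\g_\C),V_{\bar\alpha}(\g_\C)]=0$, completing the proof. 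The only real point requiring care, hence the main (mild) obstacle, is the bookkeeping around the semisimple part $ad_s$: one must make sure that the character $\alpha$ coming from Lemma \ref{kll22} genuinely lives in the same weight-space framework used to define $V_\alpha(\g_\C)$, which is handled by the equivalence between the $\frak c$-action and the $\g$-action modulo $\frak n$.
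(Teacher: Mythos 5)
Your proposal is correct and is essentially the paper's own argument: the paper simply asserts that Lemma \ref{kll} ``is readily implied by Lemma \ref{kll22}'' (having just noted that $\g$ is of type (I) if and only if the representation $ad:\frak c\to {\rm Der}(\frak n)$ is), and your write-up supplies exactly the intended deduction, including the correct bookkeeping that $V_{\alpha}(\n_\C)=V_{\alpha}(\g_\C)$ for the nontrivial characters. If anything, you are slightly more careful than the paper, which at that point speaks of an \emph{abelian} complement while only a nilpotent one is guaranteed; your use of the nilpotent complement is the right level of generality.
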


\section{Proof of Theorem \ref{spskh}}

In this section we provide a proof of Theorem  \ref{spskh}. The following easy-proof lemma will be useful in the sequel:

\begin{lemma}\label{nilsk}
Let $\frak g$ be a nilpotent Lie algebra and let $\theta$ be a closed $1$-form on $\g$.
Then a $1$-form $\eta$  solves  $d\eta-\eta\wedge\theta =0$ if and only if it is multiple of $\theta$.
\end{lemma}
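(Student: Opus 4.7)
The \emph{if} direction is immediate from $\theta\w\theta=0$ and $d\theta=0$, so the whole content lies in the converse. I will treat the case $\theta\neq 0$, which is the only one that arises in the subsequent applications (where $\theta$ will play the role of a nonzero character). The plan is to recast the twisted closedness equation as an eigenvalue condition for the adjoint action, and then finish by invoking the nilpotence of $\g$.

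First I would observe that $d\theta=0$ forces $\theta([X,Y])=0$ for all $X,Y\in\g$, so $K:=\ker\theta$ is an ideal of codimension one containing $[\g,\g]$. Fixing $Z\in\g$ with $\theta(Z)=1$ produces a vector space splitting $\g=\R Z\oplus K$. Rewriting $d\eta-\eta\w\theta=0$ via the Chevalley--Eilenberg formula yields
\[
\eta([X,Y])=\eta(Y)\theta(X)-\eta(X)\theta(Y),\qquad X,Y\in\g.
\]
For $X,Y\in K$ this reduces to $\eta([X,Y])=0$, which is automatic once $\eta|_K=0$ (since $K$ is an ideal). The decisive case is $X=Z$, $Y\in K$, which yields $\eta([Z,Y])=\eta(Y)$; equivalently $\eta|_K$ is a fixed vector of the dual endomorphism $(ad_{Z}|_K)^{*}$.

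The only substantive step is therefore to show $\eta|_K=0$. By Engel's theorem, nilpotence of $\g$ forces $ad_{Z}$ to be a nilpotent endomorphism of $\g$; since $K$ is $ad_{Z}$-invariant, the restriction $ad_{Z}|_K$ is nilpotent as well, and hence so is its dual. A nilpotent operator cannot admit a nonzero fixed vector, so $\eta|_K=0$, and therefore $\eta=\eta(Z)\,\theta$, a scalar multiple of $\theta$. I foresee no real obstacle here: the entire argument reduces to recognizing that the twisted equation, once restricted to $K$, encodes a fixed-point condition for a nilpotent operator, which Engel's theorem immediately kills.
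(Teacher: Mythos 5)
Your proof is correct, but it takes a genuinely different route from the paper. The paper's proof is a one-line appeal to Dixmier's theorem: for nilpotent $\g$ and a nonzero closed $\theta$, the cohomology of $\bigwedge\g^{\ast}$ with respect to the twisted differential $d+\theta\wedge$ vanishes, so a $(d+\theta\wedge)$-closed $1$-form (note $d\eta-\eta\wedge\theta=d\eta+\theta\wedge\eta$) is $(d+\theta\wedge)$-exact, hence of the form $(d+\theta\wedge)(c)=c\,\theta$ for a constant $c$. You instead unpack the equation via Chevalley--Eilenberg into $\eta([X,Y])=\eta(Y)\theta(X)-\eta(X)\theta(Y)$, split $\g=\R Z\oplus\ker\theta$, and observe that the only nontrivial constraint says $\eta|_{\ker\theta}$ is a fixed vector of the dual of $ad_{Z}|_{\ker\theta}$, which is nilpotent because $\g$ is; a nilpotent operator has no nonzero fixed vector, so $\eta|_{\ker\theta}=0$ and $\eta=\eta(Z)\theta$. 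Your argument is elementary and self-contained (no citation of Dixmier, no machinery beyond Engel), at the cost of working only in degree one; the paper's argument is shorter on the page and comes from a vanishing theorem valid in all degrees, which is more than the lemma needs. Both arguments, like the lemma itself, silently require $\theta\neq 0$ (for $\theta=0$ the statement fails on any non-abelian nilpotent $\g$); you are right to flag this, and in the applications $\theta$ is indeed nonzero.
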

\begin{proof}
Consider the differential operator $d+\theta\wedge$ acting on $\bigwedge\g^{\ast}$.
Then it is known that  the cohomology of $\bigwedge\g^{\ast}$ with respect to $(d+\theta\wedge)$ is trivial (see \cite{Dix}).
Hence if $\eta\in \bigwedge^{1}\g^{\ast}$ solves  $d\eta-\eta\wedge\theta =0$, then   $\eta$ is  $(d+\theta\wedge)$-exact and so
$\eta\in span_{\R}\langle\theta\rangle $, as required.
\end{proof}

Now we are ready to prove Theorem  \ref{spskh}.
\begin{proof}[Proof of Theorem $\ref{spskh}$]
Firstly we have
\[\frak h_\C= V_{\alpha_{1}}(\frak h_\C)\oplus \dots\oplus V_{\alpha_{n}}(\frak h_\C)
\]
where  $\alpha_{1},\dots, \alpha_{n}$ are some
 characters of $\frak s$.
Therefore $\g_{\C}$ splits as
\[\g_\C={\frak s}_\C \oplus  V_{\alpha_{1}}(\frak h_\C)\oplus \dots\oplus V_{\alpha_{n}}(\frak h_\C).
\]
Then we get
\[[{\frak s}, V_{\alpha_{i}}(\frak h_\C)]\subset V_{\alpha_{i}}(\frak h_\C)
\]
and
\[JV_{\alpha_{i}}(\frak h_\C)\subset V_{\alpha_{i}}(\frak h_\C)
\]
since  $J_{\vert\frak h}\circ \phi(X)=\phi(X)\circ J_{\vert\frak h}$ for any $X\in \frak s$. In view of Lemma \ref{kll}, we may assume that $\alpha_{1}$ satisfies

$$
\mathop{\rm Re}  (\alpha_1)\not=0\,,\quad V_{\alpha_1}(\frak h_\C)\not=0\,,\mbox{ and }[V_{\alpha_1}(\frak h_\C), V_{\bar\alpha_1}(\frak h_\C)]=0
$$
and we can write
\[\bigwedge \g^{\ast}_\C=\bigwedge\left({\frak s}^{\ast}_\C \oplus  V^{\ast}_{\alpha_{1}}(\frak h_\C)\oplus \dots\oplus V^{\ast}_{\alpha_{n}}(\frak h_\C)\right).\]
Then we have
\[d({\frak s}^{\ast}_\C)={\frak s}^{\ast}_\C\wedge {\frak s}^{\ast}_\C,
\]
and by  $[{\frak s}, V_{\alpha_{i}}(\frak h_\C)]\subset V_{\alpha_{i}}(\frak h_\C)$ and $[V_{\alpha_{1}}(\frak h_\C), V_{\bar\alpha_{1}}(\frak h_\C)]=0$,
we obtain
\[
d(V^{\ast}_{\alpha_{i}}(\frak h_\C))\subset  {\frak s}^{\ast}_{\C}\wedge V^{\ast}_{\alpha_{i}}(\frak h_\C)\\
\oplus\bigoplus_{(\alpha_{k},\alpha_{l})\not=(\alpha_{1},\bar\alpha_{1})} V^{\ast}_{\alpha_{k}}(\frak h_\C)\wedge  V^{\ast}_{\alpha_{l}}(\frak h_\C).
\]
Moreover
\[d({\frak s}^{\ast}_\C\wedge V^{\ast}_{\alpha_{i}}(\frak h_\C) \subset {\frak s}^{\ast}_\C\wedge{\frak s}^{\ast}_{\C}\wedge V^{\ast}_{\alpha_{i}}(\frak h_\C)\\
\oplus\bigoplus_{(\alpha_{k},\alpha_{l})\not=(\alpha_{1},\bar\alpha_{1})} {\frak s}^{\ast}_\C\wedge V^{\ast}_{\alpha_{k}}(\frak h_\C)\wedge  V^{\ast}_{\alpha_{l}}(\frak h_\C)
\]
and
\[ d(V^{\ast}_{\alpha_{i}}(\frak h_\C)\wedge V^{\ast}_{\alpha_{j}}(\frak h_\C))\subset  {\frak s}^{\ast}_\C\wedge V^{\ast}_{\alpha_{i}}(\frak h_\C)\wedge V^{\ast}_{\alpha_{j}}(\frak h_\C)+ \frak h^{\ast}_{\C}\wedge\frak h ^{\ast}_{\C}\wedge\frak h^{\ast}_{\C}.
\]
By these relations, we deduce:

\begin{enumerate}
\item[$(\star_{1})$] {\em
the $3$-forms which belongs to the space ${\frak s}^{\ast}_{\C}\wedge  V^{\ast}_{\alpha_{1}}(\frak h_\C)\wedge V^{\ast}_{\bar\alpha_{1}}(\frak h_\C)$ cannot appear in the spaces $d({\frak s}^{\ast}_{\mathbb C}\wedge {\frak s}^{\ast}_{\mathbb C})$,  $d({\frak s}^{\ast}_{\C}\wedge V^{\ast}_{\alpha_{i}}(\frak h_\C)) $ and $ d(V^{\ast}_{\alpha_{i}}(\frak h_\C)\wedge V^{\ast}_{\alpha_{j}}(\frak h_\C))$, excepting $d(V^{\ast}_{\alpha_{1}}(\frak h_\C)\wedge V^{\ast}_{\bar\alpha_{1}}(\frak h_\C))$.}
\end{enumerate}

Consider the operator $d^{c}=J^{-1}dJ$.
Then, assuming $ J\frak s\subset  \frak s$,
 we have
\begin{multline*}
 dd^c({\frak s}^{\ast}_{\C}\wedge V^{\ast}_{\alpha_{i}}(\frak h_\C))\\
\subset
 {\frak s}^{\ast}_{\C}\wedge  {\frak s}^{\ast}_{\C}\wedge{\frak s}^{\ast}_{\C}\wedge V^{\ast}_{\alpha_{i}}(\frak h_\C)
 \bigoplus_{(\alpha_{k},\alpha_{l})\not=(\alpha_{1},\bar\alpha_{1})}{\frak s}^{\ast}_{\C}\wedge  {\frak s}^{\ast}_{\C}\wedge V^{\ast}_{\alpha_{k}}(\frak h_\C)\wedge  V^{\ast}_{\alpha_{l}}(\frak h_\C)
\oplus{\frak s}^{\ast}_{\C}\wedge \frak h^{\ast}_{\C}\wedge\frak h ^{\ast}_{\C}\wedge\frak h^{\ast}_{\C}
\end{multline*}
and
\begin{multline*}
 dd^c(V^{\ast}_{\alpha_{i}}(\frak h_\C)\wedge V^{\ast}_{\alpha_{j}}(\frak h_\C))\\
\subset
{\frak s}^{\ast}_{\C}\wedge{\frak s}^{\ast}_{\C}\wedge V^{\ast}_{\alpha_{i}}(\frak h_\C)\wedge V^{\ast}_{\alpha_{j}}(\frak h_\C)\oplus {\frak s}^{\ast}_{\C}\wedge \frak h^{\ast}_{\C}\wedge\frak h^{\ast}_{\C}\wedge\frak h^{\ast}_{\C}\oplus \frak h^{\ast}_{\C}\wedge\frak h^{\ast}_{\C}\wedge\frak h^{\ast}_{\C}\wedge\frak h^{\ast}_{\C}.
\end{multline*}
By these relations, we have:

\begin{enumerate}
\item[$(\star_{2})$] if  $ J\frak s\subset  \frak s$, then $4$-forms in ${\frak s}^{\ast}_{\C}\wedge {\frak s}^{\ast}_{\C}\wedge V^{\ast}_{\alpha_{1}}(\frak h_\C)\wedge V^{\ast}_{\bar\alpha_{1}}(\frak h_\C)$ do not appear in $dd^c({\frak s}^{\ast}_{\C}\wedge {\frak s}^{\ast}_{\C})$,  $dd^{c}({\frak s}^{\ast}_{\C}\wedge V^{\ast}_{\alpha_{i}}(\frak h_\C)) $ and
$ dd^c(V^{\ast}_{\alpha_{i}}(\frak h_\C)\wedge V^{\ast}_{\alpha_{j}}(\frak h_\C))$, excepting $dd^c(V^{\ast}_{\alpha_{1}}(\frak h_\C)\wedge V^{\ast}_{\bar\alpha_{1}}(\frak h_\C))$.
\end{enumerate}

We are going to prove  the non-existence of taming symplectic (resp. SKT) structures by showing that for any $d$-closed (resp. $dd^{c}$-closed) $2$-form $\Omega$ there exists a non-zero $X\in \g$ such that $\Omega(X,JX)=0$.
We treat the cases $\mathop{\rm Im}  (\alpha_1)\not= 0$ and  $Im(\alpha_1)=0$, separately.

\medskip
\noindent {\em Case $1:\,\,\, \mathop{\rm Im}  (\alpha_1)\not= 0$.}
In this case, we have $V^{\ast}_{\alpha_{1}}(\frak h_\C)\not= V^{\ast}_{\bar\alpha_{1}}(\frak h_\C)$.
The condition $J(V^{\ast}_{\alpha_{1}}(\frak h_\C))\subset V^{\ast}_{\alpha_{1}}(\frak h_\C)$ together the assumption  $\phi\circ J=J\circ \phi$  implies the existence of a basis  $\{e_{1},\dots, e_{p}\}$ of  $V^{\ast}_{\alpha_{1}}(\frak h_\C)$ triangularizing the action of  $\frak s$  on $V^{\ast}_{\alpha_{1}}(\frak h_\C)$ and diagonalizing $J$.
The dual basis $\{e^{1},\dots, e^{p}\}$ satisfies
\[ de^{i}=\delta\wedge e^{i}\;\;\; {\rm mod}\;\;  {\frak s}^{\ast}_{\C}\wedge \langle e^{1},\dots ,e^{i-1}\rangle \oplus \frak h^{\ast}_{\C}\wedge\frak h^{\ast}_{\C}\]
for a closed $1$-form $\delta\in \frak s^{\ast}$.
Each $e^{i}$ could be either
a $(1,0)$-form or a $(0,1)$-form; therefore $\sqrt{-1}e^{i}\wedge \bar e^{i}$ is a real $(1,1)$-form.
Since
\begin{multline*} d(e^{i}\wedge \bar e^{j})=(\delta+\bar\delta)\wedge e^{i}\wedge \bar e^{j}\\
 {\rm mod}\;\; \frak s^{\ast}_{\C}\wedge \langle e^{1},\dots ,e^{i-1}\rangle \wedge \langle\bar e^{j}\rangle +\frak s^{\ast}_{\C}\wedge \langle e^{i}\rangle\wedge \langle \bar e^{1},\dots ,\bar e^{j-1}\rangle + \frak h^{\ast}_{\C}\wedge\frak h^{\ast}_{\C}\wedge \frak h^{\ast}_{\C}
\end{multline*}
condition  $(\star_{1})$, then implies that every closed  $2$-form has no component along $e^{p}\wedge \bar e^{p}$.
Therefore $J$ cannot be tamed by any symplectic form.

Suppose now that $J$ preserves $\frak s$ and $\mathfrak s$ is nilpotent.
Then we get
\begin{multline*} dd^{c}(e^{i}\wedge \bar e^{j})=(dJ(\delta+\bar\delta)-J(\delta+\bar\delta)\wedge(\delta+\bar\delta))\wedge e^{i}\wedge \bar e^{j} \\
{\rm mod}\;\; \frak s^{\ast}_{\C}\wedge  \frak s^{\ast}_{\C}\wedge\langle e^{1},\dots ,e^{i-1}\rangle \wedge \langle \bar e^{1},\dots, \bar e^{j}\rangle +\frak s^{\ast}_{\C}\wedge \frak s^{\ast}_{\C}\wedge \langle e^{1},\dots, e^{i}\rangle\wedge \langle \bar e^{1},\dots ,\bar e^{j-1}\rangle \\
+\frak s^{\ast}_{\C}\wedge  \frak h^{\ast}_{\C}\wedge\frak h^{\ast}_{\C}\wedge \frak h^{\ast}_{\C}+ \frak h^{\ast}_{\C}\wedge  \frak h^{\ast}_{\C}\wedge\frak h^{\ast}_{\C}\wedge \frak h^{\ast}_{\C}.
\end{multline*}
By $\mathop{\rm Re}  (\alpha_{1})\not=0$, we have  $\delta+\bar\delta\not=0$  and $d(\delta+\bar\delta)=0$.
Hence Lemma \ref{nilsk} ensures
$$
dJ(\delta+\bar\delta)-J(\delta+\bar\delta)\wedge(\delta+\bar\delta)\not=0\,.
$$
By $(\star_{2})$, it follows that every $dd^c$-closed $(1,1)$-form has no component along $e^{p}\wedge \bar e^{p}$ and that consequently $J$ doesn't admit any compatible SKT metric.

\medskip
{\em Case $2:\,\, \mathop{\rm Im}  (\alpha_1)=0$.}
In this case, we have $V^{\ast}_{\alpha_{1}}(\frak h_\C)= V^{\ast}_{\bar\alpha_{1}}(\frak h_\C)$.
Since $\alpha_{1}$ is real-valued, we have $V^{\ast}_{\alpha_{1}}(\frak h_\C)=V^{\ast}_{\alpha_{1}}(\frak h)\otimes \C$.
By using $JV^{\ast}_{\alpha_{1}}(\frak h)\subset V^{\ast}_{\alpha_{1}}(\frak h)$ and  $\phi\circ J=J\circ \phi$, we can construct a bais $\{e_{1},\dots , e_{2p}\}$ such that  the action of  $\frak s$ on $V^{\ast}_{\alpha_{1}}(\frak h)$ is trigonalized and $Je^{2k-1}=e^{2k}$ for every $k=1,\dots,p$.
For the dual basis $\{e^{1},\dots, e^{2p}\}$, we have
\[de^{i}=\delta\wedge e^{i} \;\;\; {\rm mod}\;\;  {\frak s}^{\ast}_{\C}\wedge \langle e^{1},\dots ,e^{i-1}\rangle \oplus \frak h^{\ast}_{\C}\wedge\frak h^{\ast}_{\C}\]
for a closed real 1-form $\delta\in \frak s^{\ast}$.
Thus  \[d(e^{i}\wedge e^{j})=2\delta\wedge e^{i}\wedge e^{j}\;\;\; {\rm mod}\;\; {\frak s}^{\ast}_{\C}\wedge \langle e^{1},\dots ,e^{i-1}\rangle\wedge \langle e^{j}\rangle + {\frak s}^{\ast}_{\C}\wedge\langle e^{i}\rangle\wedge \langle e^{1},\dots ,e^{j-1}\rangle+\frak h^{\ast}_{\C}\wedge\frak h^{\ast}_{\C}\wedge\frak h^{\ast}_{\C}.\]
By the condition $\mathop{\rm Re}  (\alpha_{1})\not=0$, we obtain  $\delta\not=0$ and every closed $2$-form $\Omega$ cannot have component along $e^{2p-1}\wedge  e^{2p}$.
Using $(\star_{1})$, we obtain
\[\Omega^{1,1}(e_{2p-1},J(e_{2p-1}))=\Omega^{1,1}(e_{2p-1},e_{2p})=0\]
and $J$ cannot be tamed by any symplectic form, as required.

Suppose now that $J$ preserves $\frak s$ and $\mathfrak s$ is nilpotent.
Then we get
\begin{multline*} dd^{c}(e^{i}\wedge  e^{j})=2(dJ\delta-2J\delta\wedge\delta)\wedge e^{i}\wedge e^{j} \\
{\rm mod}\;\; \frak s^{\ast}_{\C}\wedge  \frak s^{\ast}_{\C}\wedge\langle e^{1},\dots ,e^{i-1}\rangle \wedge \langle  e^{1},\dots,  e^{j}\rangle +\frak s^{\ast}_{\C}\wedge \frak s^{\ast}_{\C}\wedge \langle e^{1},\dots, e^{i}\rangle\wedge \langle  e^{1},\dots , e^{j-1}\rangle \\
+\frak s^{\ast}_{\C}\wedge  \frak h^{\ast}_{\C}\wedge\frak h^{\ast}_{\C}\wedge \frak h^{\ast}_{\C}+ \frak h^{\ast}_{\C}\wedge  \frak h^{\ast}_{\C}\wedge\frak h^{\ast}_{\C}\wedge \frak h^{\ast}_{\C}.
\end{multline*}
By $\mathop{\rm Re}  (\alpha_{1})\not=0$, we have  $\delta\not=0$  and $d\delta=0$.
Hence by Lemma \ref{nilsk}, we have $dJ\delta-2J\delta\wedge\delta\not=0$ and
from $(\star_{1})$ it follows that every $dd^c$-closed $(1,1)$-form has  no component along $e^{2p-1}\wedge  e^{2p}$. Therefore $J$ doesn't admit any compatible SKT metric and  the claim follows.
\end{proof}

As a consequence we get the following

\begin{cor}
Let $G/\Gamma$ be  a   complex parallelizable solvmanifold (i.e. $G$ is a complex Lie group).
Suppose that $G$ is non-nilpotent.
Then $G/\Gamma$ does not admit any SKT-structure.
\end{cor}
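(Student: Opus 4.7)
The strategy is to apply the SKT non-existence part of Theorem \ref{spskh} to the Lie algebra $\g$ of $G$, with $\frak h=\n$ the nilradical and $\frak s$ a $J$-invariant nilpotent complement. The special feature of the complex parallelizable case is that $J$ is multiplication by $i$ in the complex Lie algebra, so the Lie bracket is complex bilinear. Two consequences are immediate and crucial: first, $J\circ ad_X=ad_X\circ J$ for every $X\in \g$, so condition (5) of Theorem \ref{spskh} is automatic; second, a short computation using $[Ja,Jb]=-[a,b]$ shows that $\n+J\n$ is a nilpotent ideal, whence by maximality $J\n\subset \n$ and condition (4) holds.

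Next I would construct the complement. Starting from a real nilpotent complement $\frak c_\R$ of $\n$ given by \cite[Theorem 2.2]{Dek}, complex bilinearity forces $\frak c_\R+J\frak c_\R$ to be a $J$-invariant nilpotent subalgebra still surjecting onto $\g/\n$; a further adjustment, exploiting the complex Lie algebra structure, produces a nilpotent $J$-invariant subalgebra $\frak c$ in direct semidirect sum with $\n$, i.e.\ $\g=\frak c\ltimes\n$. Setting $\frak s=\frak c$, the remaining hypotheses $(1)$, $(2)$, and the extra SKT assumptions that $\frak s$ is nilpotent and $J(\frak s)\subset \frak s$ are built into the construction, and the nilpotency half of $(3)$ follows since $\phi(\frak c)=ad(\frak c)|_\n$ is a quotient of the nilpotent Lie algebra $\frak c$.

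What remains is to verify that $\phi=ad|_{\frak c}$ is \emph{not} of type (I). Since $G$ is non-nilpotent, some $ad_X$ on $\g$ has a nonzero complex eigenvalue $\lambda$; decomposing $X=X_{\frak c}+X_\n$ and using that $ad_{X_\n}$ is nilpotent on $\g$ (because $\n$ is a nilpotent ideal), the semisimple part of $ad_X$ coincides with that of $\phi(X_{\frak c})$, so $\phi(X_{\frak c})$ still has the nonzero eigenvalue $\lambda$. If $\lambda$ happens to be pure imaginary, one replaces $X_{\frak c}$ by $JX_{\frak c}\in \frak c$: the relation $\phi(JX_{\frak c})=J\phi(X_{\frak c})$ yields $i\lambda$, a nonzero real number, as an eigenvalue of $\phi(JX_{\frak c})$, which is not pure imaginary. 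In either case $\phi$ is not of type (I), all hypotheses of Theorem \ref{spskh} are met, and the SKT conclusion follows.

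The main obstacle is the construction of $\frak c$: it must be simultaneously nilpotent, $J$-invariant, and in direct sum with $\n$. Dekimpe's theorem provides only a real nilpotent complement with no direct-sum or $J$-invariance guarantee; $J$-invariance is gained essentially for free via the $\frak c_\R+J\frak c_\R$ trick together with complex bilinearity, but upgrading the surjection $\frak c\to \g/\n$ to an isomorphism, so that the sum $\g=\frak c+\n$ is direct, is the one technical point that requires care and must genuinely use the complex Lie algebra structure of $\g$.
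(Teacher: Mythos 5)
Your overall strategy (reduce to the SKT part of Theorem \ref{spskh}) is the right one, and your verifications of conditions (4), (5) and of the ``not of type (I)'' property via the rotation trick $\phi(JX)=J\phi(X)$ are sound in spirit. However, there is a genuine gap at exactly the point you flag as ``the one technical point that requires care'': the construction of a nilpotent, $J$-invariant subalgebra $\frak c$ with $\g=\frak c\oplus\n$ as a vector space direct sum, so that $\g=\frak c\ltimes\n$ is an honest semidirect product as required by hypothesis (1) of Theorem \ref{spskh}. Dekimpe's theorem only gives $\g=\frak c+\n$ with possibly $\frak c\cap\n\neq 0$ (the paper stresses precisely this), and the existence of a genuine complementary subalgebra amounts to a splitting of the extension $0\to\n\to\g\to\g/\n\to 0$, which fails in general. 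Your proposal does not supply the ``further adjustment'' that is supposed to produce such a splitting from the complex structure, and it is not clear that one exists when $\dim_{\C}\g/\n\geq 2$; as written, the argument does not go through.

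The paper avoids this obstruction entirely by \emph{not} taking $\frak h=\n$. It picks a complex one-dimensional subspace $\frak a$ with $\frak a\cap\n=\{0\}$ (possible since $G$ is non-nilpotent) and a $J$-invariant complement $\frak h$ with $\n\subseteq\frak h$. Then $\frak h$ is automatically an ideal because $[\g,\g]\subseteq\n\subseteq\frak h$, and $\frak a$ is automatically an abelian (hence nilpotent) subalgebra because $[A,JA]=J[A,A]=0$ by complex bilinearity; so $\g=\frak a\ltimes\frak h$ with no splitting issue. Non-nilpotency of the action of $\frak a$ on $\frak h$ follows from $\frak a\cap\n=\{0\}$, and the same $J$-rotation argument you use then shows the action is not of type (I). If you want to salvage your write-up, replace your $(\frak s,\frak h)=(\frak c,\n)$ by the paper's $(\frak a,\frak h)$; the rest of your reasoning transfers essentially verbatim.
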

\begin{proof}
Let $\frak n$ be the nilradical of the Lie algebra $\frak g$ of $G$.
Take a complex $1$-dimensional subspace $\frak a\subset \frak g$ such that $\frak a\cap \frak n=\{0\}$ and consider
 a subspace $\frak h\subset \frak g$ such that $\g=\frak a\oplus \frak h$ and $\frak n\subset \frak h$.
Since $\frak a$ is a subalgebra of $\g$ and $\frak n\supset [\g,\g]$, $\frak h$ is an ideal of $\g$ and we have $\g=\frak a\ltimes \frak h$.
By $\frak a\cap \frak n=\{0\}$, the action of $\frak a$ on $\frak h$ is non-nilpotent and so the action is  not of type (I).
Hence  the corollary follows from Theorem \ref{spskh}.
\end{proof}

\section{Examples}
In this section we apply Theorem \ref{spskh} to some examples.

\begin{ex}
Let $G=\C\ltimes_{\phi} \C^{2m}$ where
$$
\phi(x+\sqrt{-1}y)(w_{1},w_{2},\dots,w_{2m-1}, w_{2m})
=(e^{a_{1}x}w_{1},e^{-a_{1}x}w_{2},\dots, e^{a_{m}x}w_{2m-1}, e^{-a_{m}x}w_{2m})
$$
for some integers $a_{i}\not=0$.
We denote by $J$ the natural complex structure on $G$.
Then $G$ admits the left-invariant  pseudo-K\"ahler structure
$$
\omega=\sqrt{-1}dz\wedge d\bar z+\sum_{i=1}^{m}(dw_{2i-1}\wedge  d\bar w_{ 2i}+d\bar w_{2i-1}\wedge dw_{2i}).
$$
Moreover $G$ has a co-compact lattice  $\Gamma$ such that  $(G/\Gamma,J)$ satisfies the Hodge symmetry and decomposition (see \cite{KH}). In view of Theorem \ref{spskh}, $(G/\Gamma,J)$ does not admit neither a taming symplectic structure nor an SKT structure.
Moreover by Theorem \ref{almost-Abilian}, $G/\Gamma$ does not admit an invariant complex structure tamed by any symplectic form.
\end{ex}

\begin{ex}[Oeljeklaus-Toma manifolds]
Theorem  \ref{spskh} can be applied to the family of non-K\"ahler complex manifolds constructed by Oeljeklaus and Toma in \cite{OT}. We brightly describe the construction of these manifolds:\\
Let $K$ be a finite extension field of $\Q$ with the degree $s+2t$ for positive integers $s,t$.
Suppose $K$ admits embeddings $\sigma_{1},\dots ,\sigma_{s},\sigma_{s+1},\dots, \sigma_{s+2t}$ into $\C$ such that $\sigma_{1},\dots ,\sigma_{s}$ are real embeddings and $\sigma_{s+1},\dots, \sigma_{s+2t}$ are complex ones satisfying $\sigma_{s+i}=\bar \sigma_{s+i+t}$ for $1\le i\le t$.
We can choose $K$ admitting such embeddings (see \cite{OT}).
Denote ${\mathcal O}_{K}$ the ring of algebraic integers of $K$, ${\mathcal O}_{K}^{\ast}$ the group of units in ${\mathcal O}_{K}$ and
\[{\mathcal O}_{K}^{\ast\, +}=\{a\in {\mathcal O}_{K}^{\ast}: \sigma_{i}>0 \,\, {\rm for \,\,  all}\,\, 1\le i\le s\}.
\]
Define $l:{\mathcal O}_{K}^{\ast\, +}\to \R^{s+t}$ by
\[l(a)=(\log \vert \sigma_{1}(a)\vert,\dots ,\log \vert \sigma_{s}(a)\vert , 2\log \vert \sigma_{s+1}(a)\vert,\dots ,2\log \vert \sigma_{s+t}(a)\vert)
\]
for $a\in {\mathcal O}_{K}^{\ast\, +}$.
Then by Dirichlet's units theorem, $l({\mathcal O}_{K}^{\ast\, +})$ is a lattice in the vector space $L=\{x\in \R^{s+t}\,\,:\,\, \sum_{i=1}^{s+t} x_{i}=0\}$.
Let  $p:L\to \R^{s}$ be  the projection given by the first $s$ coordinate functions.
Then there exists a  subgroup $U$ of ${\mathcal O}_{K}^{\ast\, +}$ of rank $s$  such that $p(l(U))$ is a lattice in $\R^{s}$.
We have the action of $U\ltimes{\mathcal O}_{K}$ on $H^{s}\times \C^{t}$
such that
\begin{multline*}
(a,b)\cdot (x_{1}+\sqrt{-1}y_{1},\dots ,x_{s}+\sqrt{-1}y_{s}, z_{1},\dots ,z_{t})\\
=(\sigma_{1}(a)x_{1}+\sigma_{1}(b)+\sqrt{-1} \sigma_{1}(a)y_{1}, \dots ,\sigma_{s}(a)x_{s}+\sigma_{s}(b)+\sqrt{-1} \sigma_{s}(a)y_{s},\\
 \sigma_{s+1}(a)z_{1}+\sigma_{s+1}(b),\dots ,\sigma_{s+t}(a)z_{t}+\sigma_{s+t}(b)).
\end{multline*}
In \cite{OT} it is proved that the quotient $X(K,U)=H^{s}\times \C^{t}/U\ltimes{\mathcal O}_{K}$ is compact.
We call one of these complex manifolds a  Oeljeklaus-Toma  manifold of type $(s,t)$.

Consider  the Lie group $G=\R^{s}\ltimes_{\phi} (\R^{s}\times \C^{t})$ with
$$
\phi(t_{1},\dots ,t_{s})\\
={\rm diag}(e^{t_{1}},\dots ,e^{t_{s}},e^{\psi_{1}+\sqrt{-1}\varphi_{1}},\dots ,e^{\psi_{t}+\sqrt{-1}\varphi_{t}})
$$
 where $\psi_{k}=\frac{1}{2}\sum_{i=1}^{s}b_{ik}t_{i}$ and $\varphi_{k}=\sum_{i=1}^{s}c_{ik}t_{i}$ for some $b_{ik}, c_{ik}\in \R$.
Let $\g$ be the Lie algebra of $G$.
Then $\bigwedge \g^{\ast}$ is generated by basis  $\{\alpha_{1}, \dots ,\alpha_{s}, \beta_{1}, \dots, \beta_{s}, \gamma_{1},\gamma_{2},\dots ,\gamma_{2t-1}, \gamma_{2t}\}$ satisfying
\[d\alpha_{i}=0,\  d\beta=-\alpha_{i} \wedge \beta_{i},
\]
\[d\gamma_{2i-1}=\bar\psi_{i}\wedge  \gamma_{2i-1}+\bar\varphi_{i} \wedge \gamma_{2i},\,  d\gamma_{2i}=-\bar\varphi _{i}\wedge\gamma_{2i-1}+  \bar\psi_{i}\wedge \gamma_{2i},
\]
where $\bar\psi_{i}=\frac{1}{2}\sum_{i=1}^{s}b_{ik}\alpha_{i}$ and $\bar\varphi_{i} =\sum_{i=1}^{s}c_{ik}\alpha_{i}$.
Consider $w_{i}=\alpha_{i}+\sqrt{-1}\beta_{i}$ for $1\le i\le s$ and  $w_{s+i}=\gamma_{2i-1}+\sqrt{-1} \gamma_{2i}$ as $(1,0)$-forms.
Then $w_{1},\dots ,w_{s+t}$ gives a left-invariant  complex structure $J$ on $G$.
In \cite{KVai}, it is proved that any  Oeljeklaus-Toma manifold of type $(s,t)$ can be regarded as a complex  solvmanifold $(G/\Gamma,J)$.

Consider the $2$-dimensional Lie algebra $\frak r_{2}=span_{\R}\langle A,B\rangle$ such that $[A, B]=B$
and the complex structure $J_{\frak r_{2}}$  on $\frak r_{2}$ defined by the relation $JA=B$.
Then the Lie algebra  $\g$ of $G$  splits as $\g=(\frak r_{2})^{s}\ltimes \C^{t}$ and $J=J_{(\frak r_{2})^{s}}\oplus J_{\C^{t}}$.
 Hence the first part of
 Theorem \ref{spskh} implies that $G/\Gamma$ does not admit  Hermitian-symplectic structures.

On the other hand,  $
(\frak r_{2})^{s}$ is not nilpotent and we cannot apply the second part of  Theorem \ref{spskh} about the existence of SKT structures. Actually, in the case $s=t=1$, the corresponding
Oeljeklaus-Toma manifold $M$  is a $4$-dimensional solvmanifold and by the unimodularity any invariant $3$-form is closed forcing $M$ to be SKT. For $s\neq 1 $ things work differently:
\begin{prop}
Let $s\geq 2$. Then every Oeljeklaus-Toma manifold of type $(s,1)$ does not admit a SKT structure.
\end{prop}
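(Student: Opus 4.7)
We first rule out \emph{invariant} $J$-Hermitian SKT metrics on $(G/\Gamma,J)$ by a direct computation on the Lie algebra level; the general case then follows from a standard symmetrization argument (applicable since $G$ is unimodular, a consequence of the Dirichlet condition underlying the OT construction). An arbitrary invariant Hermitian metric takes the form
\[
\omega = \sqrt{-1}\sum_{a,b=1}^{s+1} h_{a\bar b}\, w_a\wedge\bar w_b,
\]
with $(h_{a\bar b})$ Hermitian positive definite. Using the structure equations above we obtain $\partial w_i = 0$ and $\bar\partial w_i = \tfrac{1}{2}\, w_i\wedge \bar w_i$ for $i\le s$, together with
\[
\partial w_{s+1} = \tfrac{1}{2}\sum_{k=1}^{s}\mu_k\, w_k\wedge w_{s+1},\qquad \bar\partial w_{s+1} = \tfrac{1}{2}\sum_{k=1}^{s}\mu_k\, \bar w_k\wedge w_{s+1},
\]
where $\mu_k = b_k/2-\sqrt{-1}\,c_k$.

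The key structural observation is that $dw_i$ and $d\bar w_i$ for $i\le s$ involve no $w_{s+1}$ or $\bar w_{s+1}$, while $dw_{s+1}$ (resp.\ $d\bar w_{s+1}$) preserves the $w_{s+1}$ (resp.\ $\bar w_{s+1}$) factor. It follows that the only summand $w_a\wedge\bar w_b$ of $\omega$ whose $\partial\bar\partial$-image can have a component along a $(2,2)$-form of the shape $w_i\wedge\bar w_j\wedge w_{s+1}\wedge\bar w_{s+1}$ is $w_{s+1}\wedge\bar w_{s+1}$ itself; in particular, the coefficient of any such 4-form in $\partial\bar\partial\omega$ depends only on $h_{s+1,\overline{s+1}}$. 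A computation analogous to those in the proof of Theorem~\ref{spskh} gives
\[
\partial(w_{s+1}\wedge\bar w_{s+1}) = \tfrac{1}{2}\sum_k b_k\, w_k\wedge w_{s+1}\wedge\bar w_{s+1},
\]
and applying $\bar\partial$ the coefficient of $w_i\wedge\bar w_j\wedge w_{s+1}\wedge\bar w_{s+1}$ with $i\ne j$ turns out to be $-b_ib_j/4$ (up to the overall factor $\sqrt{-1}\,h_{s+1,\overline{s+1}}$).

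For an Oeljeklaus-Toma manifold of type $(s,1)$ the Dirichlet relation $l(U)\subset L$ forces $\psi_1 = -\tfrac{1}{2}\sum_i t_i$, so every $b_i$ takes a common nonzero value and in particular $b_ib_j\ne 0$ for any pair $i\ne j$. When $s\ge 2$ we may choose such a pair; the SKT condition $\partial\bar\partial\omega=0$ then forces $h_{s+1,\overline{s+1}}=0$, contradicting positive-definiteness of the Hermitian matrix $(h_{a\bar b})$. The main obstacle is the combinatorial bookkeeping needed to confirm that only $h_{s+1,\overline{s+1}}$ contributes to the selected coefficient in $\partial\bar\partial\omega$; once this is done the conclusion is immediate.
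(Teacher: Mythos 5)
Your proof is correct and follows essentially the same route as the paper: after reducing to invariant metrics by averaging, you observe that only the $w_{s+1}\wedge\bar w_{s+1}$ component of the fundamental form can contribute to the mixed $(2,2)$-forms $w_i\wedge\bar w_j\wedge w_{s+1}\wedge\bar w_{s+1}$, and your off-diagonal coefficient $-b_ib_j/4$ ($i\neq j$) is exactly the surviving term $\sum_{i\neq j}\alpha_j\wedge\beta_i$ in the paper's computation of $dJ\theta-J\theta\wedge\theta$, which is nonzero precisely when $s\ge 2$. The only cosmetic difference is that you work with $\partial\bar\partial$ in the $(1,0)$-coframe and make the symmetrization step explicit, whereas the paper computes $dd^c(w_{s+1}\wedge\bar w_{s+1})$ directly.
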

\begin{proof}
In case $t=1$, we have $G=\R^{s}\ltimes _{\phi}(\R^{s}\times \C) $ where
$$
\phi(t_{1},\dots ,t_{s})={\rm diag}(e^{t_{1}},\dots ,e^{t_{s}},e^{-\frac{1}{2}(t_{1}+\dots+ t_{s})+\sqrt{-1}\varphi_{1}}).
$$
Then  $\bigwedge \g^{\ast}$ is generated by a basis  $\{\alpha_{1}, \dots ,\alpha_{s}, \beta_{1}, \dots , \beta_{s}, \gamma_{1},\gamma_{2}\}$ satisfying
\[d\alpha_{i}=0,\  d\beta=-\alpha_{i} \wedge \beta_{i},
\]
\[d\gamma_{1}=\frac{1}{2}\theta\wedge \gamma_{1}+\bar\varphi_{1} \wedge \gamma_{2},\,  d\gamma_{2}=-\bar\varphi_{1} \wedge\gamma_{1}+  \frac{1}{2}\theta\wedge \gamma_{2},
\]
where $\theta=\alpha_{1}+\dots +\alpha_{s}$ (see \cite{KVai}).
Let us consider the left-invariant $(1,0)$ coframe
 $$
\begin{aligned}
w_{i}&=\alpha_{i}+\sqrt{-1}\beta_{i}\,, \mbox{ for } 1\le i\le s\\
w_{s+1}&=\gamma_{1}+\sqrt{-1} \gamma_{2}\,.
\end{aligned}
$$
This coframe induces a global left-invariant coframe on the corresponding Oeljeklaus-Toma manifold $M=G/\Gamma$.
We have
\[dd^{c}(w_{s+1}\wedge  \bar w_{s+1})
=(dJ\theta-J\theta\wedge\theta)\wedge w_{s+1} \wedge \bar w_{s+1}\]
and
\[dJ\theta-J\theta\wedge\theta=-(\alpha_{1}\wedge\beta_{1}+\dots+\alpha_{s}\wedge\beta_{s})-(\beta_{1}+\dots\beta_{s})\wedge(\alpha_{1}+\dots+\alpha_{s})
\not=0.\]
It follows that if $\Omega$ is a $(1,1)$-form satisfying  $dd^c\Omega=0$, then $\Omega$ has no component along $ w_{s+1}\wedge \wedge \bar w_{s+1}$. This implies that every $dd^c$-closed $(1,1)$-form on $M$ is degenerate, as require.
Hence the proposition follows.
\end{proof}
\end{ex}

\begin{ex} In \cite{Yam} it was introduced the following Lie algebra admitting  pseudo-K\"ahler structures:\\
 Let $\g= {\mbox {span}}_{\R} \langle A_{i},  W_{i},X_{j} , Y_{j}, Z_{j},X_{j}^{\prime}, Y_{j}^{\prime}, Z_{j}^{\prime}\rangle_{i=1,2, j=1,2,3,4}$ where
\[[A_1,A_2] = W_1,\]
\[[X_1, Y_1] = Z_1,\;\;\;	[X_3, Y_3] = Z_3,\]
\[ [A_1, X_1] = t_0 X_1, \; [A_1, X_2]= t_{0} X_2,\;
[A_{1},X_{3}=-t_{0}X_{3},\; [A_{1},X_{4}]=-t_{0}X_{4},
\]
\[[A_{1},Y_{1}]=-2t_{0}Y_{1},\; [A_{1}, Y_{2}]=-2t_{0}Y_{2},\; [A_{1},Y_{3}]=2t_{0}Y_{3},\;[A_{1},Y_{4}]=2t_{0}Y_{4},\]
\[[A_{1},Z_{1}]=-t_{0}Z_{1},\; [A_{1},Z_{2}]=-t_{0}Z_{2},\; [A_{1},Z_{3}]=t_{0}Z_{3},\; [A_{1},Z_{4}]=t_{0}Z_{4},
\]
\[[X_{2}, Y_{1}]=Z_{2},\; [X_{4},Y_{3}]=Z_{4}],\]
\[[X^\prime_{1}, Y^{\prime}_1] = Z^{\prime}_1,\;\;\;	[X^{\prime}_3, Y^{\prime}_3] = Z^{\prime}_3,\]
\[ [A_{2}, X^{\prime}_1] = t_0 X^{\prime}_1, \; [A_2, X^{\prime}_2]= t_{0} X^{\prime}_2,\;
[A_{2},X^{\prime}_{3}=-t_{0}X^{\prime}_{3},\; [A_{2},X^{\prime}_{4}]=-t_{0}X^{\prime}_{4},
\]
\[[A_{2},Y^{\prime}_{1}]=-2t_{0}Y^{\prime}_{1},\; [A_{2}, Y^{\prime}_{2}]=-2t_{0}Y^{\prime}_{2},\; [A_{2},Y^{\prime}_{3}]=2t_{0}Y^{\prime}_{3},\;[A_{2},Y^{\prime}_{4}]=2t_{0}Y^{\prime}_{4},\]
\[[A_{2},Z^{\prime}_{1}]=-t_{0}Z^{\prime}_{1},\; [A_{2},Z^{\prime}_{2}]=-t_{0}Z^{\prime}_{2},\; [A_{2},Z^{\prime}_{3}]=t_{0}Z^{\prime}_{3},\; [A_{2},Z^{\prime}_{4}]=t_{0}Z^{\prime}_{4},
\]
\[[X^{\prime}_{2}, Y^{\prime}_{1}]=Z^{\prime}_{2},\; [X^{\prime}_{4},Y^{\prime}_{3}]=Z^{\prime}_{4}\]
and the other brackets vanish.
Then the simply connected solvable Lie group $G$ corresponding to $\g$ has a lattice (see \cite{Yam}).
We can write $\g= {\mbox {span}}_{\R} \langle A_{i},W_{i}\rangle_{i=1,2}\ltimes  {\mbox {span}}_{\R}\langle X_{j} , Y_{j}, Z_{j},X_{j}^{\prime}, Y_{j}^{\prime}, Z_{j}^{\prime}\rangle_{j=1,2,3,4}$ and
$G$ has the left-invariant complex structure $J$ defined as
\[ JA_{1}=A_{2},\; JW_{1}=W_{2},\]
\[JX_{1}=X_{2},\; JY_{1}=Y_{2},\; JZ_{1}=Z_{2},\; JX_{3}=X_{4},\; JY_{3}=Y_{4},\; JZ_{3}=Z_{4},\]
\[JX^{\prime}_{1}=X^{\prime}_{2},\; JY^{\prime}_{1}=Y^{\prime}_{2},\; JZ^{\prime}_{1}=Z^{\prime}_{2},\; JX^{\prime}_{3}=X^{\prime}_{4},\; JY_{3}=Y_{4},\; JZ^{\prime}_{3}=Z^{\prime}_{4}.\]
In view of Theorem \ref{spskh}, $G/\Gamma$ does not admit any SKT structure compatible with $J$.
\end{ex}

\section{Proof of Theorem \ref{th4.2}}
The proof of Theorem \ref{th4.2} is mainly based on the following proposition which is interesting in its own.
\begin{prop}\label{Lieta}
Let $G$ be a simply-connected solvable Lie group whose Lie algebra $\g$ is not of type (I).
Let $J$ be a left-invariant complex structure on $G$ satisfying
$$
ad_{C}\circ J=J\circ ad_{C}
$$
for every  $C$ belonging to a nilpotent complement $\frak c$ of the nilradical of $\g$. Then  $G$ does not admit  any  left-invariant symplectic form  taming $J$.
\end{prop}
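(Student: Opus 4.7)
The plan is to mimic, in the non-splitting setting, the argument used in the proof of Theorem \ref{spskh}, replacing the role of $\frak s$ by the zero-weight space $V_{\bf 0}(\g_\C)$ of $ad_s|_{\frak c}$. First, apply Lemma \ref{kll} to pick a character $\alpha$ of $\g$ with $\text{Re}(\alpha)\neq 0$, $V_\alpha(\g_\C)\neq 0$ and $[V_\alpha(\g_\C),V_{\bar\alpha}(\g_\C)]=0$. Next, observe that since $J$ commutes with $ad_C$ for every $C\in\frak c$, it also commutes with the semisimple part $ad_s(C)$ (a polynomial in $ad_C$), so $J$ preserves each generalised eigenspace $V_{\alpha_i}(\g_\C)$. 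Using the decomposition $\g_\C=V_{\bf 0}(\g_\C)\oplus V_{\alpha_1}(\g_\C)\oplus\dots\oplus V_{\alpha_n}(\g_\C)$ from Section \ref{sol}, the whole analysis can then be carried out adapted to this weight grading.

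Second, I would build a basis of $V_\alpha(\g_\C)$ compatible with both the nilpotent $ad_n|_{\frak c}$-action and $J$, splitting into the two subcases $\text{Im}(\alpha)\neq 0$ and $\text{Im}(\alpha)=0$ exactly as in Theorem \ref{spskh}. Setting $\{e^i\}$ to be the dual basis, the computation $de^i(C,\,\cdot\,)=-e^i([C,\,\cdot\,])$ together with the upper-triangular form of $ad_C|_{V_\alpha}$ yields
\[
de^{i}\;=\;\delta\wedge e^{i}\quad\text{mod}\quad V_{\bf 0}^{*}\wedge\langle e^{1},\dots,e^{i-1}\rangle\oplus\sum_{\beta+\gamma=\alpha}V_\beta^{*}\wedge V_\gamma^{*},
\]
for a closed $1$-form $\delta\in V_{\bf 0}^{*}$ encoding the character $\alpha$ (up to triangularisation corrections).

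Third, and this is the heart of the argument, I would establish the analogue of $(\star_{1})$: the summand $V_{\bf 0}^{*}\wedge V_\alpha^{*}\wedge V_{\bar\alpha}^{*}$ of $\bigwedge^{3}\g_\C^{*}$ does not appear in the image under $d$ of any of the pieces $V_{\bf 0}^{*}\wedge V_{\bf 0}^{*}$, $V_{\bf 0}^{*}\wedge V_{\alpha_i}^{*}$, or $V_{\alpha_i}^{*}\wedge V_{\alpha_j}^{*}$ with $(\alpha_i,\alpha_j)\neq(\alpha,\bar\alpha)$. The verification uses the weight rule $[V_\beta,V_\gamma]\subset V_{\beta+\gamma}$ together with $[V_\alpha,V_{\bar\alpha}]=0$: the only way to get a summand involving $V_\alpha^*$ and $V_{\bar\alpha}^*$ from $dV_{\bf 0}^*$ would need $[V_\alpha,V_{\bar\alpha}]\neq 0$, and the only way to get a factor in $V_{\bf 0}^*$ from $d$ on a product involving $V_{\alpha_i}^*$ is via the $V_{\bf 0}^*\wedge V_{\alpha_i}^*$ component of $dV_{\alpha_i}^*$, which matches $V_{\bf 0}^*\wedge V_\alpha^*\wedge V_{\bar\alpha}^*$ precisely when $(\alpha_i,\alpha_j)=(\alpha,\bar\alpha)$.

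Finally, the conclusion is formal: given a closed $2$-form $\Omega$, the projection of $d\Omega=0$ onto the top position $V_{\bf 0}^{*}\wedge e^{p}\wedge\bar e^{p}$ (where $p=\dim V_\alpha$) forces the coefficient of $e^{p}\wedge\bar e^{p}$ in $\Omega$ to satisfy $\omega_{pp}(\delta+\bar\delta)=0$; since $\text{Re}(\alpha)\neq 0$ we get $\delta+\bar\delta\neq 0$, so $\omega_{pp}=0$. Choosing $X=e_p+\bar e_p$ in Case 1 or $X=e_{2p-1}$ in Case 2 gives $\Omega(X,JX)=0$, preventing $\Omega$ from taming $J$. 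The main obstacle will be the bookkeeping in step three: unlike the splitting situation, $V_{\bf 0}$ need not be complementary to the nilradical, so $dV_{\bf 0}^*$ can produce $V_\beta^*\wedge V_{-\beta}^*$ terms, and one must use $\text{Re}(\alpha)\neq 0$ together with $[V_\alpha,V_{\bar\alpha}]=0$ to ensure none of these lie in the forbidden summand.
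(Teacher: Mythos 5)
Your proposal is correct and follows essentially the same route as the paper's proof: Lemma \ref{kll}, the $J$-invariance of the weight spaces $V_{\alpha_i}(\g_\C)$, the triangularizing basis of $V_{\alpha}(\g_\C)$ with $de^i=\delta\wedge e^i$ modulo lower-order terms, the case split on $Im(\alpha)$, and the exclusion statement for the summand containing $e^p\wedge\bar e^p$ are all the same. The one refinement the paper makes is to split $V_{\bf 0}(\g_\C)={\frak a}_\C\oplus V_{\bf 0}(\n_\C)$ for a subspace $\frak a\subset\frak c$ complementing the nilradical, and to state the key condition $(\diamond)$ with ${\frak a}^{\ast}_{\C}$ (automatically consisting of closed forms) in place of your $V_{\bf 0}^{\ast}$; this is precisely the device that resolves the bookkeeping difficulty you flag at the end, since elements of $V_{\bf 0}(\n_\C)$ act nilpotently but not necessarily triangularly on $V_{\alpha}(\g_\C)$ in the chosen basis, so the coefficient of $e^{p}\wedge\bar e^{p}$ is cleanly extracted by projecting onto ${\frak a}^{\ast}_{\C}\wedge V^{\ast}_{\alpha}(\g_\C)\wedge V^{\ast}_{\bar\alpha}(\g_\C)$.
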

\begin{proof}
By Section \ref{sol}, we have
\[\g_\C=V_{\bf 0}(\g_\C)\oplus V_{\alpha_{1}}(\g_\C)\oplus \dots\oplus V_{\alpha_{n}}(\g_\C)
\]
where $\bf 0$ is the trivial character and $\alpha_{1},\dots, \alpha_{n}$ are some non-trivial characters.
Take a subspace $\frak a\subset \frak c$ such that $\g=\frak a\oplus \n$.
Then we have
\[\g_\C={\frak a}_\C \oplus V_{\bf 0}(\n_\C)\oplus V_{\alpha_{1}}(\g_\C)\oplus \dots\oplus V_{\alpha_{n}}(\g_\C).
\]
So we obtain
\[[{\frak a}_{\C}, V_{\bf 0}(\n_\C)]\subset V_{\bf 0}(\n_\C), \quad \quad
[{\frak a}_{\C}, V_{\alpha_{i}}(\g_\C)]\subset V_{\alpha_{i}}(\g_\C)
\]
and
\[J V_{\alpha_{i}}(\g_\C)\subset V_{\alpha_{i}}(\g_\C).
\]
By Lemma \ref{kll}, we may assume that  $\alpha_{1}$ satisfies
$$
\mathop{\rm Re}   (\alpha_{1})\not=0\,,\quad V_{\alpha_{1}}(\g_\C)\not=0\, \mbox{ and }[V_{\alpha_{1}}(\g_\C), V_{\bar\alpha_{1}}(\g_\C)]=0\,.
$$
Consider the natural splitting
\[\bigwedge \g^{\ast}_\C=\bigwedge\left({\frak a}^{\ast}_\C \oplus V^{\ast}_{\bf 0}(\n_\C)\oplus V^{\ast}_{\alpha_{1}}(\g_\C)\oplus \dots\oplus V^{\ast}_{\alpha_{n}}(\g_\C)\right).\]
Then we have
\[d({\frak a}^{\ast}_\C)=0
\]
and, by taking into account  $[{\frak a}, V_{\bf 0}(\n_\C)]\subset V_{\bf 0}(\n_\C)$, $[{\frak a}, V_{\alpha_{i}}(\g_\C)]\subset V_{\alpha_{i}}(\g_\C)$ and $[V_{\alpha_{1}}(\g_\C), V_{\bar\alpha_{1}}(\g_\C)]=0$,
we get
\begin{multline*}d(V^{\ast}_{\bf 0}(\n_\C))\subset {\frak a}^{\ast}_{\C}\wedge{\frak a}^{\ast}_{\C}\oplus {\frak a}^{\ast}_{\C}\wedge V^{\ast}_{\bf 0}(\n_\C)\\
\oplus\bigoplus_{(\alpha_{k},\alpha_{l})\not=(\alpha_{1},\bar\alpha_{1})} V^{\ast}_{\alpha_{k}}(\g_\C)\wedge  V^{\ast}_{\alpha_{l}}(\g_\C)\oplus\bigoplus V^{\ast}_{\alpha_{m}}(\g_\C)\wedge V^{\ast}_{\bf 0}(\n_\C),
\end{multline*}
and
\begin{multline*}d(V^{\ast}_{\alpha_{i}}(\g_\C))\subset {\frak a}^{\ast}_{\C}\wedge{\frak a}^{\ast}_{\C}\oplus {\frak a}^{\ast}_{\C}\wedge V^{\ast}_{\alpha_{i}}(\g_\C)\\
\oplus\bigoplus_{(\alpha_{k},\alpha_{l})\not=(\alpha_{1},\bar\alpha_{1})} V^{\ast}_{\alpha_{k}}(\g_\C)\wedge  V^{\ast}_{\alpha_{l}}(\g_\C)\oplus\bigoplus V^{\ast}_{\beta_{m}}(\g_\C)\wedge V^{\ast}_{\bf 0}(\n_\C).
\end{multline*}
Hence we have
\[d({\frak a}^{\ast}_{\C}\wedge {\frak a}^{\ast}_{\C})=0
,\]
and
\begin{multline*}
 d({\frak a}^{\ast}_{\C}\wedge V^{\ast}_{\alpha_{i}}(\g_\C))\subset {\frak a}^{\ast}_{\C}\wedge {\frak a}^{\ast}_{\C}\wedge {\frak a}^{\ast}_{\C}\oplus {\frak a}^{\ast}_{\C}\wedge {\frak a}^{\ast}_{\C}\wedge V^{\ast}_{\alpha_{i}}(\g_\C)\\
 \oplus \bigoplus_{(\alpha_{k},\alpha_{l})\not=(\alpha_{1},\bar\alpha_{1})}  {\frak a}^{\ast}_{\C}\wedge V^{\ast}_{\alpha_{k}}(\g_\C)\wedge  V^{\ast}_{\alpha_{l}}(\g_\C)\oplus\bigoplus {\frak a}^{\ast}_{\C}\wedge V^{\ast}_{\alpha_{m}}(\g_\C)\wedge V^{\ast}_{\bf 0}(\n_\C)
\end{multline*}
and
\begin{multline*}
 d(V^{\ast}_{\alpha_{i}}(\g_\C)\wedge V^{\ast}_{\alpha_{j}}(\g_\C))\subset {\frak a}^{\ast}_{\C}\wedge {\frak a}^{\ast}_{\C}\wedge V^{\ast}_{\alpha_{i}}(\g_\C)\oplus {\frak a}^{\ast}_{\C}\wedge {\frak a}^{\ast}_{\C}\wedge V^{\ast}_{\alpha_{j}}(\g_\C)
\\
\oplus {\frak a}^{\ast}_{\C}\wedge V^{\ast}_{\alpha_{i}}(\g_\C)\wedge V^{\ast}_{\alpha_{j}}(\g_\C)\oplus \n^{\ast}_{\C}\wedge\n^{\ast}_{\C}\wedge\n^{\ast}_{\C}.
\end{multline*}
Combining these relations we have:

\begin{enumerate}
\item[$(\diamond)$]  $3$-forms in ${\frak a}^{\ast}_{\C}\wedge V^{\ast}_{\alpha_{1}}(\g_\C)\wedge V^{\ast}_{\bar\alpha_{1}}(\g_\C)$ do not appear in $d({\frak a}^{\ast}_{\C}\wedge {\frak a}^{\ast}_{\C})$,  $d({\frak a}^{\ast}_{\C}\wedge V^{\ast}_{\alpha_{i}}(\g_\C)) $ and $ d(V^{\ast}_{\alpha_{i}}(\g_\C)\wedge V^{\ast}_{\alpha_{j}}(\g_\C))$, excepting $d(V^{\ast}_{\alpha_{1}}(\g_\C)\wedge V^{\ast}_{\bar\alpha_{1}}(\g_\C))$.
\end{enumerate}

The non-existence of taming symplectic structures will be obtained by showing that for any $d$-closed  $2$-form $\Omega$ there exists a non-trivial $X\in \g$ such that $\Omega(X,JX)=0$.
From now on, we distinguishe the case where $\mathop{\rm Im}  (\alpha_1)\not= 0$ from the case $\mathop{\rm Im}  (\alpha_1)=0$.

\medskip
\noindent {\em Case $1:\,\,\, \mathop{\rm Im}  (\alpha_1)\not= 0$.}
In this case we have $V^{\ast}_{\alpha_{1}}(\g_\C)\not= V^{\ast}_{\alpha_{1}}(\g_\C)$.
Since $J(V^{\ast}_{\alpha_{1}}(\g_\C))\subset V^{\ast}_{\alpha_{1}}(\g_\C)$, there exists a basis $\{e_{1},\dots, e_{p}\}$ such that the action of $\frak c$  onto $V^{\ast}_{\alpha_{1}}(\g\otimes \C)$ is trigonalized and $J$ is diagonalized.
The dual basis $\{e^{1},\dots, e^{p}\}$ satisfies
\[ 
de^{i}=\delta\wedge e^{i}\;\;\; {\rm mod}\;\;  {\frak a}^{\ast}_{\C}\wedge \langle e^{1},\dots ,e^{i-1}\rangle \oplus \n^{\ast}_{\C}\wedge\n^{\ast}_{\C}\]
for a complex closed form $\delta\in {\frak a}^{\ast}_{\C}$.
 Each $e^{i}$ is either a  $(1,0)$ or a  $(0,1)$-form and so $\sqrt{-1}e^{i}\wedge \bar e^{i}$ is a real $(1,1)$-form.
Therefore
\begin{multline*} 
d(e^{i}\wedge \bar e^{j})=(\delta+\bar\delta)\wedge e^{i}\wedge e^{j}\\
 {\rm mod}\;\; \frak a^{\ast}_{\C}\wedge \langle e^{1},\dots ,e^{i-1}\rangle \wedge \langle\bar e^{j}\rangle +\frak a^{\ast}_{\C}\wedge \langle e^{i}\rangle\wedge \langle \bar e^{1},\dots ,\bar e^{j-1}\rangle + \n^{\ast}_{\C}\wedge\n^{\ast}_{\C}\wedge \n^{\ast}_{\C}.
\end{multline*}
By $\mathop{\rm Re}  (\alpha_{1})\not=0$, we have $\delta+\bar\delta\not=0$.
Hence $(\diamond)$ implies that every closed  $2$-form $\Omega$ has no component along $e^{p}\wedge \bar e^{p}$.
Hence
\[\Omega^{1,1}(e_{p}+\bar e_{p},J(e_{p}+\bar e_{p}))=\Omega^{1,1}(e_{p}+\bar e_{p},\sqrt{-1}(e_{p}-\bar e_{p}))=0\]
and  $J$ cannot be tamed by any symplectic form.

\medskip
\noindent {\em Case $2:\,\,\, \mathop{\rm Im}  (\alpha_1)= 0$.}
In this case we have $V^{\ast}_{\alpha_{1}}(\g_\C)= V^{\ast}_{\alpha_{1}}(\g_\C)$.
Since $\alpha_{1}$ is real-valued, we have $V^{\ast}_{\alpha_{1}}(\g_\C)=V^{\ast}_{\alpha_{1}}(\g)\otimes \C$.
Since $JV^{\ast}_{\alpha_{1}}(\g)\subset V^{\ast}_{\alpha_{1}}(\g)$ and  $ad_{C}\circ J=J\circ ad_{C}$ for any $C\in\frak c$ there exists a basis $\{e_{1},\dots, e_{2p}\}$ such that  the action of  $\frak c$ on $V^{\ast}_{\alpha_{1}}(\g)$ is trigonalized and $Je^{2k-1}=e^{2k}$ for each $k$.
Let $\{e^{1},\dots, e^{2p}\}$ be the dual basis. Then
\[de^{i}=\delta\wedge e^{i} \;\;\; {\rm mod}\;\;  {\frak a}^{\ast}_{\C}\wedge \langle e^{1},\dots ,e^{i-1}\rangle \oplus \n^{\ast}_{\C}\wedge\n^{\ast}_{\C}\]
for a real closed form $\delta\in \bigwedge \frak a^{\ast}$.
Hence  we have
\[d(e^{i}\wedge e^{j})=2\delta\wedge e^{i}\wedge e^{j}\;\;\; {\rm mod}\;\; {\frak a}^{\ast}_{\C}\wedge \langle e^{1},\dots ,e^{i-1}\rangle\wedge \langle e^{j}\rangle + {\frak a}^{\ast}_{\C}\wedge\langle e^{i}\rangle\wedge \langle e^{1},\dots ,e^{j-1}\rangle+\n^{\ast}_{\C}\wedge\n^{\ast}_{\C}\wedge\n^{\ast}_{\C}.\]
By $\mathop{\rm Re}  (\alpha_{1})\not=0$, we have $\delta\not=0$.
Hence by $(\diamond)$,  every closed $2$-form $\Omega$ has no component along $e^{2p-1}\wedge e^{2p}$.
Hence we have
\[\Omega^{1,1}(e_{2p-1},Je_{2p-1})=\Omega^{1,1}(e_{2p-1},e_{2p})=0.\]
and  $J$ cannot be tamed by any symplectic form, as required.
\end{proof}

Now we are ready to prove Theorem \ref{th4.2}.

\begin{proof}[Proof of Theorem $\ref{th4.2}$]
 In view of \cite{EFV} the existence  of a symplectic form taming $J$  implies the existence of an {\em invariant} symplectic form taming $J$. Hence it is enough to prove that there are no invariant symplectic forms taming $J$.
By Proposition \ref{Lieta}, the Lie algebra $\g$ is not of type (I).
Given a  nilpotent complement $\frak c\subset \g$,
 we define the diagonal representation
\[ad_{s}:\g=\frak c +\frak n\ni C+X\mapsto (ad_{C})_{s}\in D(\g).\]
Consider the extension $Ad_{s}:G\to Aut(\g)$.
Then the Zariski-closure $T={\mathcal A}(Ad_{s}(G))$ in $Aut(\g)$ is a maximal torus of the Zariski-closure ${\mathcal A}(Ad(G))$ (see \cite{Kas} and \cite{CFK}).
It is known that there exists a simply-connected nilpotent Lie group $U_{G}$, called the {\em nilshadow} of $G$, which is independent on the  choice of $T$ and satisfies $T\ltimes G=T\ltimes U_{G}$.
From \cite{CFK} it follows that if $J$ is a left-invariant complex structure on $G$ satisfying $J\circ Ad_{s}=Ad_{s}\circ J$, then $U_{G}$ inherits
a left-invariant complex structure $\tilde J$  such that $(U_{G},\tilde J)$ is bi-holomorphic to $(G,J)$.
Now every lattice of $G$ induces a discrete subgroup $\Gamma$ in $T\ltimes U_{G}$ such that  $\tilde\Gamma=U_{G}\cap \Gamma$ is a lattice of $U_{G}$ and has finite index in $\Gamma$ (see \cite[Chapter V-5]{Aus}).
There follows that   $(G/\tilde \Gamma,J)$ is bi-holomorphic to $(U_{G}/\tilde\Gamma,\tilde J)$. Hence $U_{G}/\tilde\Gamma$ is a finite covering of a Hermitian-symplectic manifold and, consequently, it inherits an invariant symplectic form $\tilde \Omega$ taming $\tilde J$.
By the main result of  \cite{EFV} it follows that $U_{G}/\tilde\Gamma$ is a torus.
Hence $(G/\Gamma,J)$ is a finite quotient of a complex torus $U_{G}/\tilde\Gamma$ by a finite group of holomorphic automorphisms and
by  \cite{BR},  $(G/\Gamma,J)$ admits a K\"ahler metric.
\end{proof}

\section{Abelian complex structures}\label{abilian}
In this section we consider abelian complex structures providing a proof of Theorem \ref{thABILIAN}.

\medskip
Theorem \ref{thABILIAN} is mainly motivated by the research in \cite{ABD}  where it is showed that a Lie group  with a left-invariant abelian complex structure admits a compatible left-invariant K\"ahler
structure if and only if it is a direct product of several copies of the real hyperbolic plane by an Euclidean factor.
Moreover, from \cite[Lemma 2.1]{ABD}  it follows that a Lie algebra  $\frak g$ with an abelian complex structure $J$ has the following properties:
\begin{enumerate}

\item[1.]  the center  $\xi (\frak g)$  of  $\frak g$  is $J$-invariant;

\vspace{0.1cm}
\item[2.] for any $X \in \frak g$,  $ad_{JX} =  - ad_X  J$;

\vspace{0.1cm}
\item[3.] the commutator $\frak g^1 = [\frak g, \frak g]$   is abelian or, equivalently, $\frak g$  is 2-step solvable;

\vspace{0.1cm}
\item[4.]  $J \frak g^1$ is an abelian subalgebra of $\frak g$;

\vspace{0.1cm}
\item[5.]  $ \frak g^1 \cap J \frak g^1$ is contained in the center of the subalgebra  $\frak g^1 + J \frak g^1$.

\end{enumerate}

Our Theorem \ref{thABILIAN} can be easily deduced in dimension 4 and 6 by using the classification of Lie algebras admitting an abelian complex structure.
Indeed, by the classifications  in dimensions $4$  (\cite{Sn}) and $6$ (\cite{ABD1})  we know that if  $(\g,J)$ is a unimodular Lie algebra  with an
 abelian complex structure, then  the existence of  a symplectic form  taming $J$ implies that $\frak g$ is abelian.  In dimension $4$ this fact follows from \cite{EF}. In dimension $6$ we use that the only  unimodular (non-nilpotent) Lie algebra admitting an abelian complex structure is holomorphically isomorphic to $( \frak s_{(-1,0)}, J)$, where  $\frak s_{(-1,0)}$  is the  solvable Lie algebra with Lie brackets
$$
\begin{array}{l}
[f_1, e_1] = [f_2, e_2] = e_1, \quad [f_1, e_2] = - [f_2, e_1] = e_2,\\[3pt]
[f_1, e_3] = [f_2, e_4] = - e_3, \quad [f_1, e_4] = - [f_2, e_3] = - e_4
\end{array}
$$
and  the abelian complex structure $J$  is given by
$$
J f_1 = f_2,  \, J e_1 = e_2,  \, J e_3 = e_4.
$$
This Lie algebra   has  nilradical   $\frak n =  {\rm {span}}_{\R}  \langle e_1, e_2, e_3, e_4\rangle $ and $ad_c \circ J = J \circ ad_c$,  for every  $c \in {\frak c} = \langle f_1, f_2\rangle $. Since $\frak c$   is an abelian complement of $\frak n$,    Theorem \ref{Lieta} implies  that $( \frak s_{(-1,0)}, J)$ does not admit any symplectic form taming $J$.

Theorem \ref{thABILIAN} follows from the following

\begin{prop} Let  $(\frak g,J)$ be a unimodular Lie algebra with an abelian complex structure. Assume that  there exists a symplectic form $\Omega$ on $\g$ taming $J$. Then $\frak g$ is abelian.
\end{prop}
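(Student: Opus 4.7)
The plan is to first convert the taming assumption into an SKT condition on the $(1,1)$-part of $\Omega$, and then rule out SKT structures under the remaining hypotheses using the techniques developed earlier in the paper.

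First, I would exploit the defining property of an abelian $J$: $\g^{1,0}$ and $\g^{0,1}$ are abelian Lie subalgebras of $\g_{\C}$. Dually this forces $\partial\alpha = 0$ for every $\alpha\in \g^{*(1,0)}$ and $\bar\partial\beta = 0$ for every $\beta\in\g^{*(0,1)}$. Splitting the taming form as $\Omega = \Omega^{2,0}+\Omega^{1,1}+\overline{\Omega^{2,0}}$ and expanding $d\Omega=0$ by bidegree, the $(3,0)$- and $(0,3)$-parts vanish automatically, while the $(2,1)$- and $(1,2)$-parts give $\partial\Omega^{1,1} = -\bar\partial\Omega^{2,0}$ and $\bar\partial\Omega^{1,1} = -\partial\overline{\Omega^{2,0}}$. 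Applying $\partial$ to the second identity and using $\partial^2=0$ yields $\partial\bar\partial\Omega^{1,1} = 0$. Since $\Omega$ tames $J$, the $(1,1)$-form $\Omega^{1,1}$ is positive, so $\Omega^{1,1}$ is the fundamental form of an invariant $J$-Hermitian SKT metric on $\g$.

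Second, I would rule out the existence of such an SKT metric by adapting the SKT half of the proof of Theorem~\ref{spskh}. By property 3, the derived algebra $\g^1$ is abelian and lies in the nilradical $\n$; pick a nilpotent complement $\mathfrak{c}$ as in Section~\ref{sol}. Assume first that $\g$ is not of type (I). Lemma~\ref{kll} then produces a character $\alpha$ of $\g$ with $\mathrm{Re}(\alpha)\neq 0$, $V_\alpha(\g_\C)\neq 0$, and $[V_\alpha(\g_\C), V_{\bar\alpha}(\g_\C)]=0$. In Theorem~\ref{spskh} the compatibility $J\circ \mathrm{ad}_C = \mathrm{ad}_C\circ J$ was used to triangulate $\mathrm{ad}_{\mathfrak{c}}$ and simultaneously diagonalize $J$ on $V_\alpha$; in the present setting property 2, $\mathrm{ad}_{JX} = -\mathrm{ad}_X\circ J$, plays the analogous role by forcing $J$ to shuffle the character spectrum of $\mathrm{ad}_s$ in a controlled way (swapping the $\alpha$- and $(-\alpha)$-pieces). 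Running the bookkeeping of conditions $(\star_1)$--$(\star_2)$ from the proof of Theorem~\ref{spskh} on the $J$-invariant block of $\g_\C$ associated with the pair $\{\alpha,-\alpha\}$ (or $\{\alpha,\bar\alpha,-\alpha,-\bar\alpha\}$ when $\mathrm{Im}(\alpha)\neq 0$) produces a basis element $e$ such that every $\partial\bar\partial$-closed $(1,1)$-form has vanishing $e\wedge\bar{e}$-component, contradicting the positivity of $\Omega^{1,1}$.

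The main obstacle of the plan is precisely this second step: reconciling the trigonalization/basis construction of Theorem~\ref{spskh} with the skew commutation of property 2, which only intertwines $J$ with the character decomposition of $\mathrm{ad}_s$ up to a sign and so requires more care than the direct commutation assumed in Theorem~\ref{spskh}. The remaining type (I) case I would handle separately: property 2 combined with unimodularity ($\mathrm{tr}\,\mathrm{ad}_X=0$) and the strong constraints listed at the beginning of Section~\ref{abilian} are expected to force the semisimple part of $\mathrm{ad}_X$ to vanish for every $X\in\g$, so that $\g$ is nilpotent, and then the nilpotent case of the non-existence result established in \cite{EFV} yields that $\g$ is abelian.
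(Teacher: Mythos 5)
There is a genuine gap, and it is structural rather than technical. Your opening move --- replacing the hypothesis ``$\Omega$ tames $J$'' by ``$\Omega^{1,1}$ is an SKT metric'' --- discards exactly the information that makes the proposition true. The implication taming $\Rightarrow$ SKT is correct (and holds for any complex structure, not just abelian ones), but the converse strategy of ``ruling out SKT'' cannot succeed: there exist non-abelian unimodular, even nilpotent, Lie algebras carrying abelian complex structures that \emph{do} admit SKT metrics (the Kodaira--Thurston nilmanifold in dimension $4$, and many of the six-dimensional SKT nilmanifolds of \cite{FPS}, are of this kind). So no argument whose only input is $\partial\bar\partial\Omega^{1,1}=0$ can conclude that $\g$ is abelian; the full condition $d\Omega=0$ must enter. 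Your second step is also not carried out where it is hardest: in the proof of Theorem \ref{spskh} the commutation $J\circ\phi(X)=\phi(X)\circ J$ is what makes each character space $V_{\alpha}$ $J$-invariant, produces the real $(1,1)$-forms $e^{i}\wedge\bar e^{i}$ inside a single block, and lets $(\star_{1})$--$(\star_{2})$ kill their coefficients. The abelian identity $ad_{JX}=-\,ad_{X}\circ J$ does not yield $J$-invariance of $V_{\alpha}$; your assertion that $J$ swaps the $\alpha$- and $(-\alpha)$-blocks is unproved, and even granting it, the crucial vanishing $[V_{\alpha}(\g_\C),V_{\bar\alpha}(\g_\C)]=0$ supplied by Lemma \ref{kll} no longer pairs with the $J$-images of $V_{\alpha}$, so the bookkeeping does not close. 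Finally, in the type (I) case you assert that unimodularity plus $ad_{JX}=-ad_X J$ forces $(ad_X)_s=0$ for all $X$, hence nilpotency; this is stated without proof and is itself a nontrivial claim.

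For comparison, the paper's proof never passes through SKT and never splits by type (I). It argues by induction on dimension, distinguishing whether $\g^{1}+J\g^{1}$ equals $\g$ or not. When $\g=\g^{1}\oplus J\g^{1}$ it identifies $(\g,J)$ with the affine Lie algebra ${\rm aff}(\mathcal A)$ of a commutative associative algebra, uses unimodularity together with the classification of such algebras to force $\mathcal A$ nilpotent, and then invokes \cite{EFV}. When $\g^{1}+J\g^{1}$ is proper, it takes a $J$-invariant complement $\frak h$ and shows $[\frak h,\g^{1}+J\g^{1}]=0$ by a direct computation with the bilinear forms $B_X(Y,Z)=\Omega([X,Y],Z)$, whose \emph{symmetry} is precisely where $d\Omega=0$ (and not merely $\partial\bar\partial\Omega^{1,1}=0$) is used. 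That computation is the ingredient your plan is missing.
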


\begin{proof}
Since the pair $(J, \Omega)$ induces a Hermitian symplectic structure on every $J$-invariant subalgebra of $\g$ and $\g^1$ and $J\g^1$ are both abelian Lie subalgebras  of  $\frak g$, it is quite natural
to work  with $\frak g^1 + J \frak g^1$. We have the  following two cases which  we will  treat separately:

\begin{enumerate}
\item[\em Case A]:  $\frak g^1 + J \frak g^1 = \frak g$

\item[\em Case B]: $\frak g^1 + J \frak g^1  \neq \frak g$.
\end{enumerate}

In the Case A we necessary have $\frak g^1 \cap J \frak g^1 = \{ 0 \}$,  since otherwise by using   that  $ \frak g^1 \cap J \frak g^1 \subseteq \xi (\frak g)$,  it should exist  a non-zero $X \in    J \xi (\frak g) \cap \frak g^1 $, but this contradicts Lemma 3.1 in \cite{EFV}. Therefore
$$
\frak g = \frak g^1 \oplus J \frak g^1,
$$
or
equivalently $\frak g$ is an abelian double product. As a consequence of   Corollary 3.3 in \cite{ABD}
the Lie bracket in  $\frak g$  induces a structure of commutative and associative algebra on  $\frak g^1$  given by
$$
X \cdot  Y = [JX,  Y].
$$
Let  $\mathcal A:=(\frak g^1,  \, \cdot)$. Then  $\mathcal A^2  = \mathcal A$ and  $(\frak g, J)$  is holomorphically
isomorphic to
${\rm {aff}} ({\mathcal A}) = {\mathcal A} \oplus {\mathcal A}$  with the standard complex structure
$$
J (X, Y) = (Y, - X).
$$
Note that in general  the Lie bracket on the affine Lie algebra ${\rm {aff}} ({\mathcal A})$ associated to a commutative associative algebra $(\mathcal A, \cdot)$  is given by
$$
[ (x, y), (x', y')] = (0, x \cdot  y' - x' \cdot y),
$$
for every $(x, y), (x', y') \in {\rm {aff}} ({\mathcal A})$.
Moreover, $ {\rm {aff}} (\mathcal A)$ is nilpotent if and only if $ \mathcal A$ is nilpotent as associative algebra.
We are going to show now that when ${\rm {aff}}({\mathcal A})$ is unimodular and it  is endowed with a symplectic form taming $J$, then  the  Lie algebra ${\rm {aff}} ({\mathcal A})$ is forced to be abelian.
Since we know that this is true in dimension $4$ and $6$ we can prove the assertion by induction on the dimension of $\mathcal A$.
We  may assume that  $\mathcal A$  is not a direct
sum of proper non-trivial ideals, since otherwise if $\mathcal A = \mathcal A_1 \oplus \cdots \mathcal \oplus  A_k$, then ${\rm {aff}} ({\mathcal A}) = {\rm {aff}} ({\mathcal A}_1) \oplus  \cdots \oplus {\rm {aff}} ({\mathcal A}_k)$ and by induction we obtain that any $ {\rm {aff}} ({\mathcal A}_k)$ is abelian.
Since $\mathcal A$ is a commutative associative algebra over $\R$, by applying Lemma 3.1 in \cite{BDeG},  we get that $\mathcal A$  is either
\begin{enumerate}
 \item[ (i)] nilpotent, or
\item [(ii)] equal to $\tilde  {\mathcal B}  =  \mathcal  B   \oplus \R  \langle1 \rangle $ for a nilpotent commutative associative algebra $\mathcal B$,  where by $1$ we denote the unit of $\mathcal A$
or
\item [(iii)] equal to  $\C \oplus  \mathcal R$, where $\mathcal R$ is the radical of $\mathcal A$.
\end{enumerate}
Since ${\rm {aff}} (\C)$ is not unimodular then we can exclude the case (iii).
Moreover, in the case (ii) ${\rm {aff}} (\mathcal A)$ cannot be unimodular, since
$$
[(1, 0), (x', y')] = (0, y'),
$$
for every $(x', y')  \in  {\rm {aff}} ({\mathcal A})$.  In particular, $[(1, 0), (0, 1)] = (0, 1)$ and then $trace (ad_{(1,0)}) \neq 0$.
We conclude then that the Lie algebra ${\rm {aff}} ({\mathcal A})$ has to be nilpotent and by \cite{EFV}  $ {\rm {aff}} ({\mathcal A})$ has to be abelian, since it is Hermitian-symplectic.

Let us consider now the Case B in which  $\frak g^1 + J \frak g^1$ is a proper ideal of $\g$.
By induction on the dimension we may assume that $\frak g^1 + J \frak g^1$ is abelian.
Fix an arbitrary  $J$-invariant  complement $\frak h$ of $\frak g^1 + J \frak g^1$. We show that $[\mathfrak h,\mathfrak g^1+ J\mathfrak g^1]=0$ proving in this way that $\mathfrak g$ is nilpotent. Fix $X\in \mathfrak h$ and consider the following two bilinear forms on $\g^1+J\g^1$
$$
B_X(Y,Z):=\Omega([X,Y],Z)\,,\quad B'_X(Y,Z):=\Omega([JX,Y],Z)\,.
$$
Since $\Omega$  is closed and $\g^1+J\g^1$ is abelian,  the two bilinear forms  $B_X$ and $B'_X$ are both symmetric. On the other hand the abelian condition on $J$ ensures that
$$
B'_X(Y,Z)=-B_X(JY,Z),
$$
for every $Y, Z \in \g^1+J\g^1$. Thus
$$
\begin{array}{lcl}
B_X (JY, JZ) &=& \Omega ([X, JY], JZ) = - \Omega([JX, Y], JZ) \\[3pt]
&=& - B'_X(Y, JZ) = - B'_X (JZ, Y) = - \Omega([JX, JZ], Y) \\[3pt]
&= & - \Omega ([X, Z], Y) = - B_X(Y, Z),
\end{array}
$$
for every $Y, Z \in \g^1+J\g^1$ or, equivalently,
$$
\Omega([X,JY],JZ)=-\Omega([X,Y],Z)\,, \quad \forall \,\, Y, Z\,\, \in \g^1+J\g^1.
$$
In particular
$$
\Omega([X,JY],J[X,JY])=\Omega([X,Y],[JX,Y])\,, \quad  \forall\,\,Y, Z \in\g^1+J\g^1.
$$
We finally show that $\Omega([X,Y],[JX,Y])=0$ obtaining in this way $[X,JY]=0$.

Indeed,
$$
\begin{array}{lcl}
\Omega([X,Y],[JX,Y]) &= &\Omega([X,[JX,Y]],Y)\\[3pt]
&=& -\Omega([Y,[X,JX]],Y)-\Omega([JX,[Y,X]],Y)\\[3pt]
&= &-\Omega([X,Y],[JX,Y])\,,
\end{array}
$$
which implies $\Omega([X,Y],[JX,Y])=0$, as required. Therefore $[\mathfrak h, \g^1+J\g^1 ]=0$ and $\g$ is nilpotent. Finally
Theorem 1.3 in \cite{EFV} implies that $\frak g$ is abelian, as required.
\end{proof}

\section{Almost-abelian  solvmanifolds}

By \cite{Ov} a  $4$-dimensional  unimodular Hermitian symplectic Lie algebra $\frak g$ is  K\"ahler and it is isomorphic to the almost abelian Lie algebra  $\tau \tau'_{3,0}$ with structure equations
$$
[e_1, e_2] = - e_3, \quad [e_1, e_3] = e_2.
$$
Note that indeed  a   $4$-dimensional  unimodular (non abelian) Lie algebra  $\frak g$  is symplectic if and only if it  is isomorphic either  to  the $3$-step $4$-dimensional  nilpotent Lie algebra   or
to a direct product of  $\R$ with a $3$-dimensional unimodular solvable Lie algebra.

The proof of Theorem \ref{almost-Abilian} is implied by the two subsequent propositions. The first one implies the statement of Theorem \ref{almost-Abilian} when $\g$ is not of type (I).

\begin{prop} \label{almostabrealtype}  Let $J$ be a  complex structure    on a unimodular    almost abelian (non-abelian)  Lie algebra $\g$.
If $\g$ is  not of type (I), then $\g$ does not admit a  symplectic structure taming $J$.
\end{prop}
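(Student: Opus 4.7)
The plan is to produce, for every closed $2$-form $\Omega$, a nonzero $X\in\g$ with $\Omega(X,JX)=0$, directly contradicting the taming inequality. Since $\dim\n=2m-1$ is odd, $J\n\neq\n$ and $\n_1:=\n\cap J\n$ is a $J$-invariant subspace of real dimension $2m-2$. Picking $Y_0\in\n\setminus\n_1$ and setting $A:=JY_0$, we get $A\notin\n$, $JA=-Y_0\in\n$, and $\g=\R A\oplus\R Y_0\oplus\n_1$. Writing $B:=\mathrm{ad}_A|_\n$, the Nijenhuis identity $N_J(A,Z)=0$ for $Z\in\n_1$ becomes $BZ+JBJZ=0$, which is equivalent to $B(\n_1)\subset\n_1$ together with $[B,J]|_{\n_1}=0$. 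In particular $BY_0=b_0Y_0+W_0$ with $W_0\in\n_1$, and $C:=B|_{\n_1}$ commutes with $J|_{\n_1}$.

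The second step is to locate a ``bad'' eigenvalue. Since $C$ commutes with $J$ on $\n_1$, it acts $\C$-linearly on $\n_1^{1,0}$ with eigenvalues $\mu_1,\dots,\mu_{m-1}$, and the real spectrum of $B$ is $\{b_0\}\cup\{\mu_j,\bar\mu_j\}_j$. Unimodularity reads $b_0+2\sum_j\mathrm{Re}(\mu_j)=0$, so if every $\mu_j$ were purely imaginary then $b_0=0$ and $\g$ would be of type $(\mathrm{I})$, contradicting the hypothesis. Thus some $\mu$ satisfies $\mathrm{Re}(\mu)\neq 0$; pick an eigenvector $Z\in\n_1^{1,0}$ with $CZ=\mu Z$ and set $X:=Z+\bar Z\in\n_1$. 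Then $X\neq 0$, $JX=i(Z-\bar Z)\in\n_1$, and both $X,JX$ lie in $\n$.

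Finally I would read the closedness condition through the derivation $\mathcal{L}_B$ and conclude by duality. Let $\alpha\in\g^*$ be the form dual to $A$ vanishing on $\n$, and split $\Omega=\alpha\wedge\eta+\omega$ with $\eta\in\n^*$, $\omega\in\Lambda^2\n^*$. Since $\n$ is abelian, a direct computation gives $d(\alpha\wedge\eta)=0$ and $d\omega=-\alpha\wedge\mathcal{L}_B\omega$, where $(\mathcal{L}_B\phi)(U,V):=\phi(BU,V)+\phi(U,BV)$; hence $d\Omega=0$ iff $\mathcal{L}_B\omega=0$. Using $BX=\mu Z+\bar\mu\bar Z$ and $BJX=i(\mu Z-\bar\mu\bar Z)$ one computes
\[
\mathcal{L}_B(X\wedge JX)=BX\wedge JX+X\wedge BJX=2\,\mathrm{Re}(\mu)\,X\wedge JX\quad\text{in }\Lambda^2\n,
\]
so $X\wedge JX\in\mathrm{Im}\,\mathcal{L}_B$ because $\mathrm{Re}(\mu)\neq 0$. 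The self-adjointness of $\mathcal{L}_B$ with respect to the natural pairing $\Lambda^2\n^*\times\Lambda^2\n\to\R$ gives $\ker\mathcal{L}_B=(\mathrm{Im}\,\mathcal{L}_B)^{\perp}$, whence $\omega(X,JX)=0$; combined with $(\alpha\wedge\eta)(X,JX)=0$ (since both $X,JX\in\ker\alpha$), this forces $\Omega(X,JX)=0$, contradicting the taming condition.

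The main obstacle is the central identity $\mathcal{L}_B(X\wedge JX)=2\,\mathrm{Re}(\mu)\,X\wedge JX$, which couples the complex eigenvalue of $C$ on $\n_1^{1,0}$ with the real bivector $X\wedge JX$ through the $J$-action. Once this and the integrability constraint $[B,J]|_{\n_1}=0$ are in place, the unimodularity versus type-$(\mathrm{I})$ comparison, the reduction $d\Omega=-\alpha\wedge\mathcal{L}_B\omega$, and the annihilator duality are all routine.
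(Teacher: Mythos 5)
Your proof is correct, and it takes a noticeably different (and arguably cleaner) route than the paper's. The paper begins by \emph{assuming} the taming form exists, extracts the Hermitian metric $g$ from $\Omega^{1,1}$, and uses the $g$-orthogonal splitting $\g=\n\oplus\mathrm{span}_{\R}\langle X\rangle$, $\frak h=\mathrm{span}_{\R}\langle X,JX\rangle^{\perp}$; it then has to prove that $\frak h$ is $\mathrm{ad}_X$-invariant by manipulating $\Omega$ itself, and finishes with the character-space machinery of Section 2 (a form $\xi$ with $d\xi=a_i\,\xi\wedge x+\beta_i\wedge x$, $\mathrm{Re}(a_i)\neq 0$, and the observation that $x\wedge\xi\wedge\bar\xi$ can only arise from $d(\xi\wedge\bar\xi)$). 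You instead work with the canonical, metric-free decomposition $\g=\R\,JY_0\oplus\R\,Y_0\oplus(\n\cap J\n)$, and you obtain both $B(\n_1)\subset\n_1$ and $[B,J]|_{\n_1}=0$ purely from the Nijenhuis condition together with $\n$ being an abelian ideal (in fact your $\n_1$ coincides with the paper's $\frak h$ when the metric exists, but you never need the metric to see it). The final step is the same mechanism in different clothing: unimodularity plus the failure of type (I) forces an eigenvalue $\mu$ of $\mathrm{ad}_A$ on $\n_1^{1,0}$ with $\mathrm{Re}(\mu)\neq 0$, and then the coefficient of the corresponding real $J$-invariant $2$-plane in any closed $2$-form must vanish --- the paper reads this off from which wedge products can occur in $d$ of the various pieces, while you package closedness as $\mathcal{L}_B\omega=0$ and use the adjointness $\langle\mathcal{L}_B\omega,\beta\rangle=\langle\omega,\mathcal{L}_B\beta\rangle$ together with $\mathcal{L}_B(X\wedge JX)=2\,\mathrm{Re}(\mu)\,X\wedge JX$. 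What your version buys is self-containedness (no reliance on the $V_\alpha$ decomposition of Section 2 and no auxiliary metric) and the sharper observation that the obstruction already kills \emph{every} closed $2$-form on the indicated $2$-plane; what the paper's version buys is uniformity with the arguments used for Theorems 1.1 and 1.2. The only points worth spelling out a little more in your write-up are (i) the one-line verification that for an almost abelian algebra ``not of type (I)'' is equivalent to $B=\mathrm{ad}_A|_{\n}$ having an eigenvalue with nonzero real part (block-triangularity of $\mathrm{ad}_{sA+n}$ with diagonal blocks $0$ and $sB$), and (ii) that $X=Z+\bar Z$ and $JX$ are genuinely real and linearly independent, so the vanishing $\Omega(X,JX)=0$ really contradicts taming; both are routine.
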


\begin{proof} Let $\frak n$ be the nilradical of $\g$. Since $\g$ is almost abelian we have that $\frak n$ has codimension $1$ and $\frak n$ is abelian. Let $\Omega$ be a symplectic form taming $J$ and $g$ the associated $J$-Hermitian metric.
We recall that this metric is defined as the Hermitian metric induced by $(1,1)$-component $\Omega^{1,1}$ of $\Omega$.
With respect to the Hermitian metric $g$ we have the orthogonal decomposition
$$
\g = \frak n  \, \oplus {\mbox{span}}_{\R}  \langle X \rangle.
$$
Since $JX$ is orthogonal to $X$, $JX$ belongs to $\frak g^1$ and thus $JX \in \frak n$. By the unimodularity of $\g$, we get that  $[X, JX]$ belongs to the  the orthogonal complement  of ${\mbox{span}}_{\R} \langle  X , JX\rangle $ with respect to $g$, i.e. to the $J$-invariant abelian Lie subalgebra
$$
\frak h =  {\mbox{span}}_{\R}  \langle  X , JX\rangle ^{\perp}.
$$
Since $\frak n$ is abelian,  by using the integrability of $J$  we obtain
$$
ad_X (JY) = J ad_X (Y),
$$
for every $Y \in \frak h$. We can show that $\frak h$ is $ad_X$-invariant.  Indeed, we know that
$$
g ([X, Y], X) = 0, \mbox{ for every }  Y \in \frak h,
$$
 or equivalently
\begin{equation} \label{othogonalh}
 \Omega (J [X, Y], X) = \Omega ([X, Y], JX),  \mbox{ for every } Y \in \frak h.
\end{equation}
Using $J (ad_X (Y)) = ad_X  (J Y)$ we have
$$
\Omega (J [X, Y], JX) = \Omega ([X, JY], JX).
$$
By \eqref{othogonalh} it follows that
$$
\Omega ( [X, JY], JX) = \Omega (J [X, JY], X) = - \Omega ([X, Y], X),
$$
i.e.  $g([X, Y], JX) =0$, for every $Y \in \frak h$.
By Section \ref{sol}, we have  the decomposition
\[\frak h_\C=V_{\bf 0}(\frak h_\C)\oplus V_{\alpha_{1}}(\frak h_\C)\oplus \dots\oplus V_{\alpha_{n}}(\frak h_\C)
\]
where $\bf 0$ is the trivial character and $\alpha_{1},\dots, \alpha_{n}$ are some non-trivial characters. Therefore
\[\g_\C=\langle X, JX\rangle \oplus  V_{\alpha_{1}}(\frak h_\C)\oplus \dots\oplus V_{\alpha_{n}}(\frak h_\C)
\]
with
\[[X, V_{\alpha_{i}}(\frak h_\C)]\subset V_{\alpha_{i}}(\frak h_\C), \quad [JX, V_{\alpha_{i}}(\frak h_\C)]=0
\]
and
\[JV_{\alpha_{i}}(\frak h_\C)\subset V_{\alpha_{i}}(\frak h_\C).
\]
 Thus
\[\bigwedge \g^{\ast}_\C= \Lambda \langle x,  Jx \rangle \otimes \Lambda ( V^{\ast}_{\bf 0}(\n_\C)\oplus V^{\ast}_{\alpha_{1}}(\g_\C)\oplus \dots\oplus V^{\ast}_{\alpha_{n}}(\g_\C) ),\]
where $x$ denotes the dual of $X$. Since $\g$ is not of type  (I), then there exists $\xi \in V_{\alpha_i}$ such that
$$
J \xi = i \xi, \quad d \xi = a_i \xi \wedge x + \beta_i \wedge x,
$$
with $Re (a_i) \neq 0$ and $\beta_i \in V_{\alpha_{i}}(\frak h_\C)$ such that $\beta_i \wedge \xi =0$. Therefore $x \wedge  \xi \wedge \overline \xi$  can appear only in $d (\xi \wedge \overline \xi)$, but this  implies then that $\Omega (Z, JZ) =0$, where $Z -  i JZ$ is the dual of $\xi$.
\end{proof}

\begin{rem}  {\em Theorem \ref{almost-Abilian} can be generalized to (I)-type Lie algebras by introducing some extra assumptions on $J$. Indeed, if $(\Omega,J)$ is a  Hermitian-symplectic structure   on a unimodular  almost-abelian Lie algebra $\g$ of type $I$, then we still have the  orthogonal decomposition with respect to the metric $g$ induced by $\Omega^{1,1}$
\begin{equation} \label{decomposalmab}
\g = {\mbox{span}}_{\R} \langle X, JX\rangle   \oplus  \, \frak h,
\end{equation}
with $[X, JX]�\in \frak h$, $\frak h$ abelian  and $ad_X (\frak h) \subseteq \frak h$. So in particular,
$\frak g^1 \subseteq \frak h$ and   $d x =0= d  (Jx)$. Therefore if for instance we require that
 $[X, JX] = 0$, then $\frak c =\langle X\rangle $ is an abelian complement of $\frak n$ and $J$ is ${\frak  c}$-invariant. So  if the associated simply-connected Lie group $G$ has a lattice, we can apply Theorem \ref{th4.2} obtaining that $(G/\Gamma, J)$ is K\"ahler.}
\end{rem}

Using Proposition \ref{almostabrealtype} and the previous remark we can prove the following

\begin{theorem} Let $G/\Gamma$ be a $6$-dimensional solvmanifold  endowed with   a left-invariant complex structure $J$.
If $G$ is almost abelian and $G/\Gamma$ admits a symplectic structure taming $J$, then $G/\Gamma$ admits a K\"ahler structure.
\end{theorem}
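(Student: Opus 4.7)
The plan is to reduce to the invariant, type (I) setting and then apply Theorem \ref{th4.2}. By \cite{EFV}, any taming symplectic form on $G/\Gamma$ gives rise to an invariant one, so I may work at the Lie-algebra level. If $\g$ were not of type (I), Proposition \ref{almostabrealtype} would rule out the existence of any taming symplectic form, contradicting the hypothesis; hence $\g$ is of type (I).

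Next I would set up the orthogonal decomposition $\g = \mbox{span}_{\R}\langle X,JX\rangle \oplus \frak h$ using the $J$-Hermitian metric $g$ induced by the $(1,1)$-part of the invariant taming form $\Omega$, exactly as in the remark preceding the statement. The subspace $\frak h$ is automatically $J$-invariant (the $g$-orthogonal complement of a $J$-invariant subspace is $J$-invariant in a Hermitian metric), abelian (as a subspace of the abelian nilradical $\frak n$), and $ad_X$-preserving by the argument already used in the proof of Proposition \ref{almostabrealtype}. Since $\frak n$ is abelian, the Nijenhuis identity $N(X,Y)=0$ for $Y\in\frak h$ collapses, via $[JX,Y]=[JX,JY]=0$, to $ad_X\circ J = J\circ ad_X$ on $\frak h$. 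To then invoke Theorem \ref{th4.2}, it suffices to prove $[X,JX]=0$: indeed $\frak c=\R X$ would be an abelian (hence nilpotent) complement of $\frak n$, and a direct check using $J^2=-1$ together with the previous step would show $ad_X$ commutes with $J$ on the whole of $\g$, so the theorem would yield a K\"ahler structure on $(G/\Gamma,J)$.

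The hard step, and the only place where the dimension hypothesis actually enters, is the vanishing of $Z:=[X,JX]\in\frak h$. The operator $A:=ad_X|_{\frak h}$ is a real endomorphism of the four-dimensional $J$-invariant subspace $\frak h$, commutes with $J|_{\frak h}$, and, by type (I), has purely imaginary eigenvalues. Its semisimple part then has the form $A_s=\lambda_1 J|_{V_1}+\lambda_2 J|_{V_2}$ for a $J$-invariant orthogonal splitting $\frak h=V_1\oplus V_2$, with $A_n$ commuting with both $J$ and $A_s$. In an adapted real coframe $\{x,y,e^1,\ldots,e^4\}$ triangularising $A$ inside each $V_i$ and diagonalising $J$ on $\frak h$, I would read off the structure equations (in particular $dx=dy=0$ by unimodularity, while $de^i$ produces a term $-Z^i\,x\wedge y$) and then exploit $d\Omega=0$ tested against triples $(X,JX,e_i)$ together with the taming positivity $\Omega(V,JV)>0$. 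The resulting finite linear system on the components of $Z$ should force $Z=0$; in dimension six the system is sufficiently small that a case analysis on the Jordan form of $A$ remains tractable. With $Z=0$ in hand, Theorem \ref{th4.2} delivers the desired K\"ahler structure, and by Hasegawa $(G/\Gamma,J)$ is in fact a finite quotient of a complex torus.
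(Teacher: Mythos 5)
Your overall skeleton matches the paper's: reduce to an invariant form via \cite{EFV}, dispose of the non--type (I) case with Proposition \ref{almostabrealtype}, take the orthogonal decomposition $\g = {\rm span}_{\R}\langle X, JX\rangle \oplus \frak h$ from the remark, and aim to show $[X,JX]=0$ so that Theorem \ref{th4.2} applies. The reduction steps are fine, including the observation that integrability plus the abelianity of $\frak n$ gives $ad_X\circ J = J\circ ad_X$ on $\frak h$, and that $[X,JX]=0$ is exactly what is needed to extend this commutation to all of $\g$.

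However, the decisive step is missing. You correctly identify the vanishing of $Z=[X,JX]$ as ``the hard step,'' but your treatment of it consists of setting up an adapted coframe and then asserting that testing $d\Omega=0$ against triples $(X,JX,e_i)$ ``should force $Z=0$'' via a ``tractable'' case analysis on the Jordan form of $ad_X|_{\frak h}$. No such system is written down and no case is actually checked, so the proof is not complete at precisely the point where all the content of the theorem beyond the preceding remark resides. For comparison, the paper argues by contradiction assuming $Y=[X,JX]\neq 0$ and splits into two genuinely different situations: if $[X,Y]\in{\rm span}_{\R}\langle Y,JY\rangle$, then $\frak k={\rm span}_{\R}\langle X,JX,Y,JY\rangle$ is a $4$-dimensional unimodular $J$-invariant subalgebra inheriting a Hermitian-symplectic structure, hence by \cite{Ov} isomorphic to $\tau\tau'_{3,0}$, which is incompatible with $[X,JX]\neq 0$; if instead $[X,Y]\notin{\rm span}_{\R}\langle Y,JY\rangle$, then $\{X,JX,Y,JY,Z=[X,Y],JZ\}$ is a basis, the structure equations are written out explicitly, and one checks that $x\wedge z\wedge Jy$ occurs only in $d(z\wedge Jz)$, forcing $\Omega(Z,JZ)=0$ and contradicting taming. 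Note that the first sub-case is settled by an external classification result rather than by a linear-algebra computation on the components of $Z$, so it is not evident that your proposed single computational scheme would cover it; at minimum you would need to exhibit the linear system and verify that it has only the trivial solution in every Jordan type, which you have not done.
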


\begin{proof} If $G$ is not of type (I), then the result follows by  Proposition \ref{almostabrealtype}. Suppose  that $G$ is of type  (I). By  previous remark we have the orthogonal decomposition
\eqref{decomposalmab}  with $[X, JX]�\in \frak h$, $\frak h$ abelian  and $ad_X (\frak h) \subseteq \frak h$.

If  $[X, JX] =0$, the result follows applying  Theorem \ref{th4.2}. Suppose that $Y=[X, JX] \neq 0$. Since $Y \in \frak h$, we have that $X, JX, Y, JY$ are  linearly independent and they  generate a $4$-dimensional subspace of $\frak g$.

If $[X, Y] \in {\mbox{span}}_{\R} \langle Y, JY\rangle$, then $\frak k =  {\mbox{span}}_{\R} \langle X, JX, Y,JY\rangle $ is a  $4$-dimensional  Lie subalgebra of $\frak g$. Since $\frak k$ is $J$-invariant, then $\frak k$ admits a Hermitian-symplectic structure. The result follows from the fact the $\frak k$ is unimodular and then it  has to be isomorphic to $\tau \tau'_{3,0}$, but  if $[X, JX] \neq 0$ this  is not possible.

If $[X, Y] $ does not belong to ${\mbox{span}}_{\R}\langle Y, JY\rangle $, then $$\{ X, JX, Y=[X, JX], JY, Z = [X, Y], JZ \}$$  is a basis of $\frak g$. Note that $JZ =   [X, JY]$. Let $ \{ x, Jx, y, Jy, z, J z \}$ be the dual basis of $\{ X, JX, Y JY, Z , JZ \}$. We have that $\g$ has structure equations
$$
\left \{ \begin{array}{l}
d x =0,\\
d (Jx) =0,\\
d y = - x \wedge Jx,\\
d (Jy) = x \wedge (a z + b Jz),\\
d z = - x \wedge y,\\
d (Jz) = - x \wedge Jy,
\end{array}
\right.
$$
with $a, b \in \R$. Then, by a direct computation one has that
$$
d (z \wedge Jz) = - x \wedge y \wedge Jz + z \wedge x \wedge Jy
$$
and that the term $z \wedge x \wedge Jy$ can appear only in $d (z \wedge Jz)$. Therefore, we must have $\Omega (Z, JZ) =0$.
\end{proof}

\end{document}